\newcommand{\mb}{\mathbb}
\newcommand{\mc}{\mathcal}
\newcommand{\m}{\mbox}
\newcommand{\f}{\frac}
\newcommand{\ld}{\lambda}
\newcommand{\og}{\omega}
\newcommand{\rt}{\sqrt}
\newcommand{\dd}{\partial}
\newcommand{\nn}{\nonumber}
\newcommand{\ep}{\epsilon}
\newcommand{\qd}{\quad}
\newcommand{\Og}{\Omega}
\newcommand{\Ld}{\Lambda}
\newcommand{\iy}{\infty}
\newcommand{\tn}{\textnormal}
\newcommand{\del}{\partial}
\newtheorem{theorem}{Theorem}[section]
\newtheorem{lemma}[theorem]{Lemma}
\newtheorem{proposition}[theorem]{Proposition}
\newtheorem{corollary}[theorem]{Corollary}
\numberwithin{equation}{section}
\newcommand{\la}{\mathcal{L}}
\newcommand{\bp}{\begin{prob}}
	\newcommand{\bpr}{\begin{proof}}
		\newcommand{\epr}{\end{proof}}
	\newcommand{\lb}{\left(}
	\newcommand{\rb}{\right)}
	\renewcommand{\d}{\mathrm{d}}
	\newcommand{\Rb}{\mathbb{R}}
	\newcommand{\Sb}{\mathbb{S}}
	\newcommand{\D}{\mathrm{d}}
	\theoremstyle{definition}
	\title[Partial data inverse problem]{Stability estimate for a partial data inverse problem for the convection-diffusion equation}
	\author[Senapati and Vashisth]{Soumen Senapati$^{\dagger}$ and Manmohan Vashisth$^{*}$}
	\address{$^{\dagger}$ TIFR Centre for Applicable Mathematics, Bangalore 560065, India. 
		\newline\indent E-mail:{\tt \ soumen@tifrbng.res.in}}
	\address{$^{*}$ Department of Mathematics, Indian Institute of Technology, Jammu 181221, India.
		\newline
		\indent E-mail:{\tt\  manmohan.vashisth@iitjammu.ac.in, \ manmohanvashisth@gmail.com}}
\begin{document}		
		\maketitle
		
		\begin{abstract}
			In this article, we study the stability in the inverse problem of determining the time-dependent convection term and density coefficient appearing  in the convection-diffusion equation, from partial boundary measurements. For dimension $n\ge 2$, we show the convection term (modulo the gauge term) admits log-log stability, whereas log-log-log stability estimate is obtained for the density coefficient.  
		\end{abstract}
		
		\vspace{2mm}
		\textbf{Keywords:} Inverse problems, partial Dirichlet to Neumann map, parabolic equation, Carleman estimates, stability estimate.\\
		
		\textbf{Mathematics subject classification 2010:} 35R30, 35K20.
		\section{Introduction}\label{Introduction}
		For $n\ge 2$, let $\Omega\subset\mathbb{R}^{n}$ be a bounded simply connected domain having $C^2$ smooth boundary $\Gamma=\partial\Omega$. For $T>0$, let us introduce the parabolic operator $\mc{L}_{A,q}$ in the cylinder $Q:=(0,T)\times\Omega$ by 
		\begin{align}\label{Definition of LAq}
		\mc{L}_{A,q}:=\partial_{t}-\sum_{j=1}^{n}\left(\partial_{j}+{A_{j}(t,x)}\right)^{2}+q(t,x),
		\end{align}
		where $A(t,x):=\lb A_1(t,x),A_2(t,x),...,A_n(t,x)\rb\in(W^{1,\iy}(Q))^n$ and $q\in L^{\iy}(Q)$. We consider an initial boundary value problem (IBVP) known for the convection-diffusion equation which models physical processes like mass or heat transfer within a body and, also appears in probabilistic study of diffusion process (like, the Fokker-Planck and Kolmogorov equations), finance (like, the Black–Scholes or the Ornstein-Uhlenbeck processes) and chemical engineering (for describing the movement of macro-particles)
		\begin{align}\label{definition of operator}
		\begin{aligned}
		\begin{cases}
		\mc{L}_{A,q} u(t,x)=0,\ \ (t,x)\in Q,\\
		u(0,x)=0,\ \ x\in\Omega,\\
		u(t,x)=f(t,x), \ \ (t,x)\in\Sigma:=(0,T)\times\Gamma.
		\end{cases}
		\end{aligned}
		\end{align}
		
		We first briefly discuss some well-posedness results regarding the above IBVP. Following \cite{Caro_Kian_Convection_nonlinear}, one can consider suitable spaces for the forward problem \eqref{definition of operator}. However, in the context of our article, we assume more regularity on the coefficients in the operator \eqref{Definition of LAq}. Therefore, we define the admissible set of coefficients for a given $m>0$ by 
		\begin{align}
		\mc{M}(m)=\big\{(A,q)\in (W^{2,\iy}(Q))^n\times W^{1,\iy}(Q);\|A\|_{W^{2,\iy}(Q)}+\|q\|_{W^{1,\iy}(Q)}\le m\big\}.\nn
		\end{align} 
		Now, we introduce some time-dependent Sobolev spaces for $p,q$ being non-negative real numbers and $M=\Omega$ or, $\Gamma$
		\begin{align*}
		H^{p,q}\left((0,T)\times M\right):=L^2\left(0,T;H^p(M)\right)\cap H^q\left(0,T;L^2(M)\right),
		\end{align*}
		equipped with the norm
		\begin{align*}
		\|u\|_{H^{p,q}\left((0,T)\times M\right)}=\|u\|_{L^2\left(0,T;H^p(M)\right)}+\|u\|_{H^q\left(0,T;L^2(M)\right)}.
		\end{align*}
		Further, we denote $H^{p,q}_0(\Sigma):=\{f\in H^{p,q}(\Sigma);f(0,x)=0,\ x\in\Omega\}$. 
		The existence of unique solution $u\in H^{2,1}(Q)$ to the IBVP \eqref{definition of operator} for a given Dirichlet data $f\in H_0^{\f{3}{2},\f{3}{4}}(\Sigma)$ follows from \cite{Lion-Magenes2}. Also then, we have some $C>0$ depending only on $m$ and $Q$ such that
		\begin{align*}
		\|u\|_{H^{2,1}(Q)}\le C\|f\|_{H_0^{\f{3}{2},\f{3}{4}}(\Sigma)}.
		\end{align*}
		We can define the Dirichlet to Neumann (DN) map $\Lambda^{*}_{A,q}:H_0^{\f{3}{2},\f{3}{4}}(\Sigma)\to H^{\f{1}{2},\f{1}{4}}(\Sigma)$ as 
		\begin{align*}
		\Lambda^{*}_{A,q}(f)=\lb \partial_{\nu}u+2(\nu\cdot A)u\rb|_{\Sigma},
		\end{align*}
		where $u$ solves \eqref{definition of operator} and $\nu(x)$ denotes the unit outward normal at $x\in\Gamma$. 
		
		The inverse problem under consideration here is the stable recovery of time-evolving properties of a homogeneous medium such as $A$ and $q$, by applying heat source on $\Sigma$ and measuring the heat flux on a part of $\Sigma$. To be precise, we study the stability aspects for unique recovery of $(A,q)$ from a partial DN map which measures the Neumann outputs on a part of $\Sigma$ related to a small open neighborhood of the $\og_0$-illuminated face which is defined in \eqref{boundary portions}. It is known that the convection term can be recovered only under the divergence free condition (with respect to space variables) because of the non-uniqueness associated to the gauge transform (see \cite{Pohjola_steady_state_CD,Sahoo_Vashisth}). 
		We derive a stability estimate for the divergence free convection term $A$. In doing so, Vessella's conditional stability result \cite{Vessella} will be used crucially. Also we borrow an important construction for the principal term in the geometric optics solutions from \cite{Kian_Soccorsi_Holder_stability_Scrodinger} which was originally used in the framework of dynamical Schr\"odinger equation. The decay in the remainder terms of the geometric optics solutions follows from a Carleman estimate. For stability of the density coefficient, we again use Vessella's result \cite{Vessella} in combination with the stability estimate of $A$. 
		
		The issues regarding unique and stable determination of coefficients appearing in parabolic PDEs from boundary measurements have attracted much attention during last several decades. Motivated by the seminal work \cite{Sylvester_Uhlmann_Calderon_problem_1987} by Sylvester and Uhlmann, Isakov in \cite{Isakov_Completeness} uniquely determined the time-dependent coefficient when $A=0$, by using an argument based on completeness of the product of solutions. The stability issues of the same problem has been resolved by Choulli in \cite{Choulli_Book}. In \cite{Avdonin_Seidman_parabolic}, Avdonin and Seidman used boundary control (BC) method pioneered by Belishev which is further developed by Katchalov, Kurylev and Lassas (see \cite{Belishev_BC_Method_Recent_Progress,Katchalov_Kurylev_Lassas_Book_2001} and references therein), to establish uniqueness result for time-independent $q$. In the absence of any zeroth order term, Cheng and Yamamoto proved in \cite{Cheng_Yamamoto_Global_Convection_2D_DN,Chen_Yamamoto_parabolic,Cheng_Yamamoto_Steady} uniqueness of convection term which belongs to some Lebesgue spaces from single measurement when $n=2$. Gaitan and Kian in \cite{Gaitan_Kian_Cylinderical_Stability} obtained stable determination result for time-dependent $q$ in a bounded cylindrical domain when $A=0$ which was further generalized in the article \cite{Kian_Yamamoto} by Kian and Yamamoto proving analogous results in time-fractional diffusion equation settings. In \cite{Choulli_Kian_parabolic_partial}, Choulli and Kian derived logarithmic stability estimates for time-dependent term $q$ working only with partial DN map, in the absence of first order coefficients. In \cite{Bellassoued_Rassas}, Bellassoued and Rassas stably determined the convection term $A$ and density coefficient $q$ both of which are time-independent. Vashisth and Sahoo in \cite{Sahoo_Vashisth} obtained unique determination result for time-dependent convection term (modulo gauge equivalence) and density coefficient from full Dirichlet and partial Neumann data.  In this work, we have proved the stability estimate for determining the time-dependent convection term and the density coefficients from the knowledge of full Dirichlet data and the Neumann data measured on a portion which is slightly bigger than half of the lateral boundary.   We refer to \cite{Bellassoued_Kian_Soccorsi_parabolic_single_measurement,Choulli_Abstract_IP,Choulli_Abstract_IP_Applications,Choulli_Kian_parabolic_Parabolic,Choulli_Kian_Soccorsi_waveguide,Choulli_Book,Deng_Yu_Yang_First_order_parabolic_1995,Gaitan_Kian_Cylinderical_Stability,Isakov_semilinear,Isakov_Book,Nakamura_Sasayama_Parabolic} for more works in inverse problems related to parabolic PDEs. Also, there have been a considerable amount of work done in the context of hyperbolic and dynamical Schr\"odinger equations (see \cite{Avdonin and Belishev,BP,Bellassoued_Ferreira_anisotropic_Schrodinger,Bellassoued_Kian_Soccorsi_Scrodinger_infinite,Ibtissem_Magnetic_Scrodinger_time,Bukhgeuim_Klibanov_hyperbolic,Eskin_electromagentic_time,Kian_Phan_Socorsi_Carleman_Infinite_Cylinder,Kian_Phan_Socorsi_Unbounded,Kian_Soccorsi_Holder_stability_Scrodinger,Kian_Tetlow_Holder_stability_dynamical Scrodinger,Ibtissem_wave_equation,Rakesh_Symes_Uniqueness_1988,Salazar,Bellassoued_Jellali_Yamamoto_Lipschitz_stability_hyperbolic,Bellassoued_Jellali_Yamamoto_stability_hyperbolic,Hu_Kian_Wave_equation, Kian_ptential_uniqueness_wave,Kian_potential_stability_wave,Kian_damping,Kian-Oksanen,Krishnan_Vashisth_Relativistic,Senapati_stability_hyperbolic,Mishra_Vashisth_stability,Kian_stability_infinite wave guide wave} and references therein).     
		
		The article is organized as follows. In \S 2, the main result of the article is stated. Then boundary and interior Carleman estimates for the operator $\mc{L}_{A,q}$ blue are derived in \S 3, followed by the construction of geometric optics solutions in \S 4. Finally, we discuss the stable determination results for the convection term $A$ and density coefficient $q$ in \S 5.

		\section{Statement of the main result}\label{Main result statement}
		We begin this section by introducing some notations. Following \cite{Bukhgeim_Uhlmann_Calderon_problem_partial_Cauchy_data_2002},  
		fix an $\omega_{0}\in\mathbb{S}^{n-1}$ and define the $\omega_{0}$-shadowed and $\omega_{0}$-illuminated faces by 
		\begin{align*}
		\partial\Omega_{+}(\omega_{0}):=\left\{x\in\partial\Omega:\ \nu(x)\cdot\omega_{0}\geq 0 \right\},\ \ \partial\Omega_{-}(\omega_{0}):=\left\{x\in\partial\Omega:\ \nu(x)\cdot\omega_{0}\leq 0 \right\}
		\end{align*}
		of $\partial\Omega$ where $\nu(x)$ is the outward unit normal to $\partial\Omega$ at $x\in\partial\Omega$. For a given small $\ep>0$, we define the small open neighborhoods of $\partial\Omega_{+}(\omega_{0})$ and  $\partial\Omega_{-}(\omega_{0})$ by  
		\begin{align}\label{boundary portions}
		\del\Omega_{+,{\epsilon}/{2}}(\omega_{0}):=\left \{x\in\del\Omega;\ \nu(x)\cdot\omega_{0}>\f{\epsilon}{2}\right\},\qd\mbox{and}\qd \del\Omega_{-,{\epsilon}/{2}}(\omega_{0}):=\left\{x\in\del\Omega;\ \nu(x)\cdot\omega_{0}<\f{\epsilon}{2}\right\}.
		\end{align}
		respectively. 
		Corresponding to $\partial\Omega_{\pm}(\omega_{0})$ and $\del\Omega_{\pm,{\epsilon}/{2}}(\omega_0)$, we denote the lateral boundary parts by 
		$\Sigma_{\pm}(\omega_{0}):=(0,T)\times\partial\Omega_{\pm}(\omega_{0})$ and $\Sigma_{\pm,{\epsilon}/{2}}(\omega_0):=(0,T)\times\del\Omega_{\pm,{\epsilon}/{2}}(\omega_0)$ respectively. Let us define the partial Dirichlet to Neumann map denoted by $\Ld_{A,q}: H_0^{\f{3}{2},\f{3}{4}}(\Sigma)\to H^{\f{1}{2},\f{1}{4}}(\Sigma_{-,{\epsilon}/{2}}(\omega_0))$ as
		\begin{align}\label{partial DN map}
		\Lambda_{A,q}(f)=\lb \partial_{\nu}u+2(A\cdot\nu)u\rb \big\vert_{\Sigma_{-,{\epsilon}/{2}}(\omega_0)}   
		\end{align}
		 We now state the main result of this article.
		\begin{theorem}\label{Main Theorem}
			Let $(A_i,q_i)\in\mc{M}(m),\ i = 1, 2$	and $T>\tn{diam}\ \Omega$, where $\Omega\subset\mb{R}^n$ is a bounded $C^2$ smooth domain for $n\ge 2$. We denote by $\Lambda_i$ the partial DN map corresponding to $\mc{L}_{A_i,q_i}$ as defined in \eqref{partial DN map}. Under the assumption $A_1\vert_{\Sigma}=A_2\vert_{\Sigma}$ and $\nabla_x\cdot A_1=\nabla_x\cdot A_2$ in $Q$, we have the following estimates for some positive constants $C,\alpha_1,\alpha_2,\beta_1$ and $\beta_2$ depending on $m$ and $Q$ 
			\begin{align*}
			&\|A_1-A_2\|_{L^2(Q)}\le C\left(\|\Ld_1-\Ld_2\|^{\alpha_1}+\left|\log|\log\|\Lambda_1-\Lambda_2\||\right|^{\alpha_2}\right),\\
			&\|q_1-q_2\|_{L^2(Q)}\le C\left(\|\Ld_1-\Ld_2\|^{\beta_1}+\left|\log\left|\log|\log\|\Lambda_1-\Lambda_2\|\right||\right|^{\beta_2}\right).
			\end{align*}	
		\end{theorem}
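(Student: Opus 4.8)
The plan is to combine complex geometric optics (CGO) solutions of the two equations with the interior and boundary Carleman estimates of \S 3, and then to convert the frequency information so obtained into an $L^2$ bound by means of Vessella's conditional stability result \cite{Vessella}. First I would fix a direction $\omega_0\in\mathbb{S}^{n-1}$ and a large parameter $\tau>0$, and use the construction of \S 4 (adapting \cite{Kian_Soccorsi_Holder_stability_Scrodinger}) to build a solution $u_1=e^{\Phi}(a_1+r_1)$ of $\mc{L}_{A_1,q_1}u_1=0$ together with a solution $u_2=e^{-\overline{\Phi}}(a_2+r_2)$ of the backward adjoint equation $\mc{L}_{A_2,q_2}^{*}u_2=0$. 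Here the phase $\Phi$ is built from $\omega_0$ and $\tau$; the hypothesis $T>\tn{diam}\,\Omega$ guarantees that this phase produces admissible solutions on all of $Q$. The principal amplitudes $a_j$ are chosen to solve the associated first-order transport equations (so that the leading interaction term does not vanish), while the interior Carleman estimate forces the remainders to satisfy $\|r_j\|\le C\tau^{-1}$.

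Writing $A:=A_1-A_2$ and $q:=q_1-q_2$, integrating $\mc{L}_{A_1,q_1}u_1=0$ against $u_2$ and using Green's formula produces an identity of the schematic form
\[
\int_Q\Big(2A\cdot\nabla u_1\,\overline{u_2}+q\,u_1\overline{u_2}\Big)\,dx\,dt=\big\langle(\Lambda_1-\Lambda_2)f,\,\overline{u_2}\big\rangle_{\Sigma_{-,\epsilon/2}(\omega_0)}+\mathcal{R},
\]
where $\mathcal{R}$ collects the boundary contributions over the unmeasured face. The assumption $A_1|_{\Sigma}=A_2|_{\Sigma}$ kills the zeroth order boundary term, and the boundary Carleman estimate of \S 3 (whose weight is adapted to $\omega_0$) shows that $\mathcal{R}$ is exponentially small in $\tau$ on $\Sigma_{+,\epsilon/2}(\omega_0)$ and may therefore be absorbed. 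Inserting the CGO solutions and letting $\tau\to\infty$, the leading part of the left-hand side reproduces the Fourier transform of the (divergence-free part of the) coefficient over a cone of frequencies dictated by $\omega_0$, the correction terms are $O(\tau^{-1})$, and the measured data term is amplified by a factor $e^{C\tau}$.

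The core of the estimate is then the trade-off $|\widehat{A}(\xi)|\le C\big(e^{C\tau}\|\Lambda_1-\Lambda_2\|+\tau^{-1}\big)$, valid on the frequency region cut out by the geometry; optimizing over $\tau$ yields a single logarithmic loss. The divergence-free condition $\nabla_x\cdot A_1=\nabla_x\cdot A_2$ together with $A_1|_{\Sigma}=A_2|_{\Sigma}$ eliminates the gauge ambiguity and allows me to pass from this partial information to $A$ itself; upgrading the cone of recovered frequencies to a genuine $L^2(Q)$ bound is exactly where Vessella's analytic-continuation estimate \cite{Vessella} enters, and it is this step that turns the single logarithm into the log-log rate for $\|A_1-A_2\|_{L^2(Q)}$. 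For the density coefficient I would return to the same integral identity, substitute the now-controlled $A$ so as to isolate the term carrying $q$, and run the argument once more; because the contribution of $A$ must be estimated before $q$ can be separated, the bound for $q$ inherits the log-log error of $A$ and, after a second application of \cite{Vessella}, picks up the additional logarithm, giving the log-log-log estimate.

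The step I expect to be the main obstacle is the quantitative reconciliation of the two competing scales — the exponential amplification $e^{C\tau}$ intrinsic to inverting the ill-posed DN map against the merely polynomial decay $\tau^{-1}$ of the Carleman remainders — while simultaneously certifying that the unmeasured boundary term $\mathcal{R}$ is negligible. This demands a delicate choice of $\Phi$ so that its real part has a favourable sign on $\partial\Omega_{+}(\omega_0)$, which is precisely the feature the boundary Carleman estimate is engineered to exploit. A further subtlety is to organize the two-stage recovery so that the logarithmic losses accumulate in a controlled fashion, one loss from Vessella for $A$ and a second when the error in $A$ is fed into the reconstruction of $q$; it is this accumulation that accounts for the gap between the log-log rate for $A$ and the log-log-log rate for $q$.
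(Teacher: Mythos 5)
Your plan coincides with the paper's proof in all essential respects: geometric optics solutions for $\mc{L}_{A_2,q_2}$ and the adjoint $\mc{L}_{-A_1,\overline q_1}$ inserted into the Green's-formula identity, the boundary Carleman estimate of \S 3 to absorb the unmeasured part $\Sigma_{+,\epsilon/2}(\omega_0)$, a Fourier bound for $\omega\cdot A$ on the cone $\mb{R}\times\mc{H}$ upgraded to all components via the divergence-free condition, Vessella's analytic-continuation estimate to extend to $B(0,R)$, parameter optimization giving the log-log rate, and a second pass for $q$ that inherits the error in $A$ and a further logarithm. The only slips are cosmetic: $T>\tn{diam}\,\Omega$ is used not for admissibility of the phase but to obtain the derivative bounds $|\partial^\gamma \widehat{\eta_\delta^2\partial_k A}|\le C e^{R}|\gamma|!\,T^{|\gamma|}$ required by Vessella's theorem, and the data term is amplified by $e^{e^{\kappa R}}$ (since $\lambda$ must be taken exponentially large in $R$), which is exactly what forces $R\sim\log|\log\|\Lambda_1-\Lambda_2\||$.
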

		We remark here that, in the recent work \cite{Bellassoued_Ben Fraj}, Bellassoued and Ben Fraj discussed the stability aspects of determining the time-dependent coefficients appearing in the convection-diffusion equation and proved logarithmic and double logarithmic stability results for the convection term and density coefficient respectively. Moreover the Neumann measurements there are taken on any arbitrary part of the lateral boundary $\Sigma$; but the coefficients in \cite{Bellassoued_Ben Fraj} are assumed to be known in an open set containing $\Sigma$ which is essential to apply a local unique continuation result near the boundary. In contrast to \cite{Bellassoued_Ben Fraj}, we consider coefficients which agree only on the lateral boundary. Although we work with the Neumann data measured on a particular subset of $\Sigma$, which is slightly more than half of the boundary and obtain double and triple logarithmic stability estimates for $A$ and $q$ respectively.

		\section{Boundary and interior Carleman estimates}\label{Boundary Carleman estimate}
		We prove here a boundary Carleman estimate involving for the operator $ \la_{A,q} $ 
		which will be used to control the boundary terms appearing in the integral identity given by \eqref{integral identity} where no information is given. Now, we choose $x_0\in\Rb^n$ such that $\inf\limits_{x\in\overline{\Omega}}\left(x+x_0\right)\cdot\og>0$. For this choice of $\og$ and $x_0$, the derivation of Carleman estimate goes as follows.
		
		\begin{theorem}\label{Boundary Carleman estimate Theorem}
			Let $\phi(t,x)=\ld^2 t+\ld x\cdot \omega$ and $u\in C^2(\overline{Q})$ with $u(0,\cdot)=0$ and $u|_{\Sigma}=0$. For $(A,q)\in\mc{M}(m)$ there exist $\ld_1,C>0,$ depending only on  $m$ and $Q$ such that  
			\begin{align}\label{Carleman}
			\begin{aligned}
			&\int_{Q} e^{-2\phi(t,x)} \lb \lambda^{2}	\lvert u(t,x)\rvert^{2}+\lvert \nabla_{x} u(t,x)\rvert^{2}\rb\ \d x \d t
			+\int_{\Omega}e^{-2\phi(T,x)}\lb \lambda \lvert u(T,x)\rvert^{2}+\lvert \nabla_{x}u(T,x)\rvert^{2}\rb\ \d x\\
			&\qd +\lambda  \int_{\Sigma_{+,\og}}e^{-2\phi(t,x)}\og\cdot\nu(x)\vert\dd_{\nu}u(t,x)\vert^2\ \d S_{x}\d t\leq C \int_{Q}e^{-2\phi(t,x)}\lvert \mc{L}_{A,q}u(t,x)\rvert^{2}\ \d x\d t\\
			&\qd \qd\qd\qd\qd\qd\qd\qd\qd\qd\qd\qd\qd+ C\lambda\int_{\Sigma_{-,\og}}e^{-2\phi(t,x)}\lvert\og\cdot\nu(x)\rvert\lvert\dd_{\nu}u(t,x)\rvert^2\ \d S_{x}\d t
			\end{aligned}
			\end{align}
			holds for all $\ld\ge\ld_1$. 
			\begin{proof}
			We have to convexify the Carleman weight $\phi$ appropriately due to the presence of first order derivatives in $\mc{L}_{A,q}$. For a proof of the boundary Carleman estimate, we refer to \cite{Caro_Kian_Convection_nonlinear} where the following convexified weight has been considered for $s>0$
						\begin{align*}
				\phi_{s}(t,x):=\ld^2t+\ld x\cdot\og-\f{s\left((x+x_0)\cdot\og\right)^2}{2}
				\end{align*}  
				where $x_{0}\in \mathbb{R}^{n}$ is as mentioned in the line just before the statement of Theorm \ref{Boundary Carleman estimate Theorem}. 
			\end{proof}  
		\end{theorem}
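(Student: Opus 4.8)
The plan is to reduce \eqref{Carleman} to its principal part $\dd_t-\Delta$ and to extract positivity through the standard conjugate-and-split scheme, with the convexified weight $\phi_s$ supplying the convexity that the linear weight $\phi$ lacks. First I would conjugate: writing $w=e^{-\phi_s}u$ and $P_{\phi_s}:=e^{-\phi_s}(\dd_t-\Delta)e^{\phi_s}$, a direct computation with $\psi:=(x+x_0)\cdot\og$ gives
\[
P_{\phi_s}w=\dd_t w-\Delta w-2(\ld-s\psi)\,\og\cdot\gd w+\big(2\ld s\psi-s^2\psi^2+s\big)w ,
\]
where the role of the $\ld^2t$ term in $\phi_s$ is precisely to cancel the would-be $|\gd\phi_s|^2$ contribution at top order. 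I would then split $P_{\phi_s}=\mc{S}+\mc{A}$ into its formally symmetric part $\mc{S}=-\Delta+(\cdots)$ and antisymmetric part $\mc{A}=\dd_t-2(\ld-s\psi)\og\cdot\gd+(\cdots)$, and expand $\|P_{\phi_s}w\|_{L^2(Q)}^2=\|\mc{S}w\|^2+\|\mc{A}w\|^2+2\,\mathrm{Re}\,\langle \mc{S}w,\mc{A}w\rangle$.

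The heart of the matter is the cross term, which after integration by parts equals $\langle[\mc{S},\mc{A}]w,w\rangle$ plus boundary contributions. For the unconvexified linear weight (i.e. $s=0$) this commutator vanishes identically, since $-\Delta$, $\dd_t$ and $\og\cdot\gd$ have constant coefficients and commute; this is exactly why $\phi$ alone is inadequate. The quadratic correction $-\tfrac{s}{2}\psi^2$ produces a nonzero Hessian $\gd^2\phi_s=-s\,\og\otimes\og$, and the commutator then acquires a positive leading contribution. To upgrade the directional control in the $\og$-direction to the full gradient $|\gd w|^2$ appearing in \eqref{Carleman}, I would combine the commutator with the retained squares $\|\mc{S}w\|^2$, $\|\mc{A}w\|^2$, using the elliptic identity $\int_{\Omega}|\gd w|^2=-\int_{\Omega}w\,\Delta w$ (valid since $w|_{\Sigma}=0$) together with a $\ld$-weighted Cauchy--Schwarz splitting. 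Fixing $s$ large enough and then taking $\ld\ge\ld_1$ yields the interior estimate for the principal operator, with a gradient term carrying a gain of a power of $\ld$.

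Next I would track the boundary terms generated by the integrations by parts. The antisymmetric operator $\dd_t$ contributes integrals over $\{t=0\}$ and $\{t=T\}$: the hypothesis $u(0,\cdot)=0$ (hence $w(0,\cdot)=0$) kills the former, while the latter produces the nonnegative term $\int_{\Omega}e^{-2\phi(T,x)}\big(\ld|u(T,x)|^2+|\gd u(T,x)|^2\big)\,\d x$ kept on the left. The operators $-\Delta$ and $\og\cdot\gd$ contribute integrals over $\Sigma$; since $w|_{\Sigma}=0$ the tangential gradient vanishes and $\gd w=(\dd_\nu w)\nu$ there, so on $\Sigma$ one has $|\dd_\nu w|^2=e^{-2\phi}|\dd_\nu u|^2$ and every boundary integrand is proportional to $\dd_\nu\phi_s\,|\dd_\nu w|^2=(\ld-s\psi)(\og\cdot\nu)\,|\dd_\nu w|^2$. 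For $\ld$ large the sign is governed by $\og\cdot\nu$: on $\Sigma_{+,\og}$ (where $\og\cdot\nu\ge 0$) this is favorable and I keep it on the left, while on $\Sigma_{-,\og}$ (where $\og\cdot\nu\le 0$) it is unfavorable and is moved to the right as $C\ld\int_{\Sigma_{-,\og}}e^{-2\phi}|\og\cdot\nu|\,|\dd_\nu u|^2$, matching the last term of \eqref{Carleman}.

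Finally I would pass from $\dd_t-\Delta$ to $\mc{L}_{A,q}$. Expanding $\sum_{j}(\dd_j+A_j)^2=\Delta+2A\cdot\gd+(\gd\cdot A)+|A|^2$ gives $(\dd_t-\Delta)u=\mc{L}_{A,q}u+2A\cdot\gd u+(\gd\cdot A+|A|^2-q)u$, so $e^{-2\phi}|(\dd_t-\Delta)u|^2\le 2e^{-2\phi}|\mc{L}_{A,q}u|^2+Cm^2 e^{-2\phi}|\gd u|^2+C_m e^{-2\phi}|u|^2$. The zeroth-order term $C_m|u|^2$ is harmless for $\ld$ large thanks to the $\ld^2|u|^2$ on the left; the genuinely delicate point, and the main obstacle, is the first-order term $Cm^2|\gd u|^2$, which cannot be absorbed by a gradient term carrying only a constant coefficient. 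This is exactly why the convexification is needed: the commutator analysis above delivers the gradient on the left with a factor $\ld$, so for $\ld\ge\ld_1$ chosen in terms of $m$ one has $Cm^2|\gd u|^2\le \tfrac12\,\ld|\gd u|^2$ and the perturbation is absorbed, leaving the weaker form $|\gd u|^2$ asserted in \eqref{Carleman}. Since $\Omega$ is bounded and $s$ is fixed, $\tfrac{s}{2}\psi^2$ is bounded on $\overline{\Omega}$, so $e^{-2\phi_s}$ and $e^{-2\phi}$ are comparable up to a constant depending only on $s,x_0$ and $Q$; the estimate for $\phi_s$ therefore transfers to the asserted estimate for $\phi$ with constants depending only on $m$ and $Q$. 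This is the scheme carried out in \cite{Caro_Kian_Convection_nonlinear}.
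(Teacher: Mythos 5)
Your outline follows exactly the route the paper takes: the paper's own ``proof'' consists of introducing the same convexified weight $\phi_{s}(t,x)=\ld^2t+\ld x\cdot\og-\tfrac{s}{2}\left((x+x_0)\cdot\og\right)^2$ and deferring the conjugation/splitting/commutator computation to \cite{Caro_Kian_Convection_nonlinear}, and your sketch simply carries out the standard steps of that computation (sign of $\dd_{\nu}\phi_{s}$ on $\Sigma_{\pm,\og}$, absorption of the first-order perturbation using the gain produced by the convexification, and comparability of $e^{-2\phi_{s}}$ with $e^{-2\phi}$ on the bounded domain). There is no discrepancy in approach, only more detail than the paper chooses to give.
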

		
		We write down the interior Carleman estimate which easily  follows from Theorem \ref{Boundary Carleman estimate Theorem} and  will be used to construct the geometric optics solutions.
		\begin{corollary}\tn{(Interior Carleman estimate).}\label{Interior Carleman estimate}
			For $(A,q)\in\mc{M}(m)$ there exist $\ld_1,C>0$ depending only on  $m$ and $Q$ such that the following estimate holds for $u\in C_c^{\iy}(Q)$ and $\ld\ge\ld_1$
			\begin{align*}
			\begin{aligned}
			\int_{Q} e^{-2\phi(t,x)} \lb \lambda^{2}	\lvert u(t,x)\rvert^{2}+\lvert \nabla_{x} u(t,x)\rvert^{2}\rb\ \d x\d t
			\leq C \int_{Q}e^{-2\phi(t,x)}\lvert \mc{L}_{A,q}u(t,x)\rvert^{2}\ \d x\d t.
			\end{aligned}
			\end{align*}
		\end{corollary}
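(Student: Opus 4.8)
The plan is to specialize the boundary Carleman estimate of Theorem \ref{Boundary Carleman estimate Theorem} to functions supported away from the parabolic boundary of $Q$. First I would observe that any $u\in C_c^{\iy}(Q)$ belongs to $C^2(\overline{Q})$ and, because its support is a compact subset of the open cylinder $Q=(0,T)\times\Omega$, it vanishes together with all of its derivatives in a neighbourhood of $\lb\{0\}\times\Omega\rb\cup\lb\{T\}\times\Omega\rb\cup\Sigma$. In particular the two standing hypotheses of Theorem \ref{Boundary Carleman estimate Theorem}, namely $u(0,\cdot)=0$ and $u|_{\Sigma}=0$, are automatically met, so the estimate \eqref{Carleman} is at our disposal for such $u$ with the very same constants $\ld_1,C$ depending only on $m$ and $Q$.

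Next I would argue that the compact support forces every terminal and lateral contribution in \eqref{Carleman} to vanish identically. Since $u\equiv 0$ in a neighbourhood of $t=T$, we have $u(T,\cdot)=0$ and $\nabla_x u(T,\cdot)=0$, so the terminal integral $\int_{\Omega}e^{-2\phi(T,x)}\lb\ld\,\lvert u(T,x)\rvert^2+\lvert\nabla_x u(T,x)\rvert^2\rb\,\d x$ is zero. Likewise $u$ vanishes near the lateral boundary $\Sigma$, hence $\dd_{\nu}u|_{\Sigma}=0$, and therefore both boundary integrals appearing in \eqref{Carleman}, over $\Sigma_{+,\og}$ on the left and over $\Sigma_{-,\og}$ on the right, vanish as well.

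With all of these terms equal to zero, inequality \eqref{Carleman} collapses to exactly $\int_{Q} e^{-2\phi(t,x)}\lb\ld^2\lvert u(t,x)\rvert^2+\lvert\nabla_x u(t,x)\rvert^2\rb\,\d x\,\d t\le C\int_{Q}e^{-2\phi(t,x)}\lvert\mc{L}_{A,q}u(t,x)\rvert^2\,\d x\,\d t$, which is precisely the claimed interior estimate. There is no genuine obstacle here; the only point worth checking is that the terminal and lateral terms are truly zero for compactly supported $u$ rather than merely of a favourable sign, so that discarding them costs nothing and the constants $\ld_1,C$ transfer unchanged from Theorem \ref{Boundary Carleman estimate Theorem}.
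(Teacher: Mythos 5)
Your proposal is correct and is exactly the argument the paper intends: the corollary is stated as following "easily" from Theorem \ref{Boundary Carleman estimate Theorem}, and the route is precisely to note that $u\in C_c^{\iy}(Q)$ satisfies the hypotheses of that theorem while its compact support in the open cylinder kills the terminal integral and both lateral boundary integrals, leaving the interior estimate with the same constants. Nothing further is needed.
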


		\section{Construction of geometric optics solutions}\label{Contruction of go solutions}
		In this section, we construct the geometric optics solutions for the parabolic operator $\mc{L}_{A,q}$ and its formal $L^{2}$ adjoint $\mc{L}^{*}_{A,q}=\mc{L}_{-A,\overline{q}}$. For $\ld>0$, let  $\phi(t,x):=\ld^{2}t+\ld x\cdot\omega$ be the weight function. Then we construct the geometric optics solutions $u$ and $v$ for the operators $\mc{L}_{A,q}$ and $\mc{L}^{*}_{A,q}$ respectively which have the following forms
		\begin{align}\label{Construction for GO} 
		\begin{aligned}
		&	u(t,x)=e^{\phi(t,x)}\left(B_{g}+R_{g}\right)(t,x),\ \ \ \\
		\tn{and}\ \ &	v(t,x)=e^{-\phi(t,x)}\left(B_{d}+R_{d}\right)(t,x).
		\end{aligned}
		\end{align} 
		Next we show that for $\ld$ large enough, the remainder terms $R_{g}$ and $R_{d}$ can be estimated in terms of their principal terms $B_{g}$ and $B_{d}$ respectively. The decay of $R_{d}$ and $R_{g}$ in  $\ld$ will be crucial to derive stability results from an integral identity obtained by using the solution to an adjoint problem and the given data. 
		
		\vspace*{3mm}
		We start with some definition and notations. For $m\in\mb{R}$, we define $L^2(0,T;H^m_\ld(\mb R^n))$ by
		\[L^2(0,T;H^m_\ld(\mb R^n)):=\{u(t,\cdot)\in\mc{S}'(\mb R^n);\left(\ld^2+|\xi|^2\right)^{m/2}\mc{F}_xu(t,\xi)\in L^2((0,T)\times\mb R^n)\},\]
		equipped with the norm
		\[\|u\|^2_{L^2(0,T;H^m_\ld(\mb R^n))}=\int_0^T\int_{\mb R^n}\left(\ld^2+|\xi|^2\right)^{m}|\mc{F}_xu(t,\xi)|^2\ \D\xi dt, \] 
		where $\mc{S}'(\mb R^n)$ denote the space of tempered distributions on $\mb R^n$ and $\mc{F}_x$ is the Fourier transform with respect to the space variables. For $0<\delta<<1$, we consider a sequence $\eta_\delta\in C_{c}^{\infty}(\delta, T-\delta)$ such that   
		\[\mbox{$\eta_\delta\equiv 1$ on $[2\delta,T-2\delta]$ and, 	 }\ \|\eta_\delta\|_{W^{k,\iy}(\mb R)}\le C\delta^{-k},\qd\tn{for }k\in\mb{N}.\] 
		\begin{theorem}\label{GO solutions Theorem}
			Let   $\delta\in(0,T/4)$, $\mathcal{L}_{A,q}$ be as in \eqref{Definition of LAq} and for $\omega\in \Sb^{n-1}$, let $\phi(t,x)=\lambda^{2}t+\lambda x\cdot\omega$.  
			\begin{enumerate}
				\item (Exponentially growing solutions) \label{Growing GO Theorem}
				For   $(\tau,\xi)\in\mb R^{1+n}$ such that $\og\cdot\xi=0$, $( A,q)\in\mc{M}(m)$ and $D\in W^{2,\iy}(Q)^{n}$ with $\|D\|_{W^{2,\iy}(Q)}\le C_0$, there exists $\ld_0>0$ depending on $m,C_0$ and $Q$ such that for $ \ld\ge \ld_0$, we can find  $v_{g}\in H^{2,1}\left((0,T)\times\Omega\right)$ solution to  	\begin{align*}
				\begin{aligned}
				\begin{cases}
				\mc{L}_{A,q}v(t,x)=0,\qd (t,x)\in Q,\\
				v(0,x)=0,\qd x\in \Omega
				\end{cases}
				\end{aligned}
				\end{align*} 
				taking the form 
				\begin{align}\label{Growing Go solutions}
				v_{g}(t,x)=e^{\phi(t,x)}\left(B_{g}(t,x)+R_{g}(t,x)\right)
				\end{align}
				where $B_{g}(t,x)$ is given by 
				\begin{align}\label{Expression for Bg}
				B_{g}(t,x)=\eta_\delta(t)\f{\xi}{|\xi|}\cdot\nabla_x\left(e^{-i(\tau,\xi)\cdot (t,x)}e^{\left(\int_\mb R\og\cdot D(t,x+s\og)\ \d s\right)}\right)e^{\left(\int_0^\iy\omega\cdot A(t,x+s\omega)\ \d s\right)}
				\end{align}
				and $R_{g}$ satisfies the following estimate 
				\begin{align}\label{Estimate for Rg}
				\|R_g\|_{L^2(0,T;H^{k}(\Omega))}\le C\ld^{-1+k}\delta^{-3}\langle\tau,\xi\rangle^3 ,\ \ \mbox{for $k\in\{0,1,2\}$}.
				\end{align}
				\item (Exponentially decaying solutions)\label{Decaying GO Theorem} For $(\mc A,q)\in\mc{M}(m)$, there exists $\ld_0>0$ depending on $m$ and $Q$ such that for $ \ld\ge \ld_0$, we can find $v_{d}\in H^{2,1}\left((0,T)\times\Omega\right)$ solution to 
				\begin{align*}
				\begin{aligned}
				\begin{cases}
				\mc{L}^{*}_{A,q}v(t,x)=0,\qd (t,x)\in Q,\\
				v(T,x)=0,\qd x\in \Omega.
				\end{cases}
				\end{aligned}
				\end{align*}
				taking the form
				\begin{align}\label{Decaying GO solutions}
				v_{d}(t,x)=e^{-\phi(t,x)}\left(B_d(t,x)+R_d(t,x)\right)
				\end{align}
				where $B_{d}(t,x)$ is given by 
				\begin{align}\label{Expression for Bd}
				B_{d}(t,x)=\eta_\delta(t)e^{\left(\int_0^\iy\omega\cdot A(t,x+s\omega)\ \d s\right)}
				\end{align}
				and  $R_{d}$ satisfies the following estimates 	
				\begin{align*}
				\|R_d\|_{L^2(0,T;H^{k}(\Omega))}\le C\ld^{-1+k}\delta^{-3},\ \ \mbox{for $k\in\{0,1,2\}$}. 
				\end{align*}
			\end{enumerate}
		\end{theorem}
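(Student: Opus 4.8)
The plan is to construct both solutions by conjugating $\mc{L}_{A,q}$ (resp. its adjoint $\mc{L}^{*}_{A,q}$) with the weight $e^{\pm\phi}$ and exploiting the parabolic structure of $\phi=\ld^2 t+\ld x\cdot\og$. Writing $u=e^{\phi}w$ with $w=B_g+R_g$, a direct computation gives
\begin{align*}
e^{-\phi}\mc{L}_{A,q}(e^{\phi}w)=\dd_t w-\Delta_x w-2\ld\lb\og\cdot\nabla_x+\og\cdot A\rb w-2A\cdot\nabla_x w+\lb q-\nabla_x\cdot A-|A|^2\rb w,
\end{align*}
the key point being that the two $\ld^2$-contributions (from $\dd_t e^{\phi}$ and from $\Delta_x e^{\phi}$, using $|\og|=1$) cancel exactly, so the top-order term in $\ld$ is the first-order transport operator $-2\ld(\og\cdot\nabla_x+\og\cdot A)$. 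The analogous computation for $\mc{L}^{*}_{A,q}$ with weight $e^{-\phi}$ again cancels the $\ld^2$-terms and leaves a first-order transport operator along $\og$. I would then \emph{define} $B_g$ and $B_d$ to be annihilated by these leading operators: solving $\og\cdot\nabla_x B+(\og\cdot A)B=0$ by characteristics yields the integrating factor $\exp\lb\int_0^{\infty}\og\cdot A(t,x+s\og)\,\d s\rb$ (using that $\og\cdot\nabla_x\int_0^{\infty}\og\cdot A(t,x+s\og)\,\d s=-\og\cdot A$ once $A$ is extended by zero off $Q$), which is exactly the exponential factor in \eqref{Expression for Bg} and \eqref{Expression for Bd}. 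The remaining factor in $B_g$, namely $\eta_\delta(t)\,\tfrac{\xi}{|\xi|}\cdot\nabla_x\lb e^{-i(\tau,\xi)\cdot(t,x)}e^{\int_{\mb R}\og\cdot D\,\d s}\rb$, is constant along $\og$ (because $\og\cdot\xi=0$ and $\int_{\mb R}\og\cdot D\,\d s$ is $\og$-translation invariant), hence does not disturb the transport equation while injecting the Fourier mode needed to recover $A$ later.

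Second, with $B_g$ (resp. $B_d$) chosen so the $O(\ld)$ term disappears, the remainder must satisfy the conjugated equation $P_\phi R_g=-\lb\dd_t-\Delta_x-2A\cdot\nabla_x+q-\nabla_x\cdot A-|A|^2\rb B_g$ (the sub-principal part applied to $B_g$), together with $R_g(0,\cdot)=0$; the analogous equation holds for $R_d$ with $R_d(T,\cdot)=0$. Both one-sided conditions are consistent because $\eta_\delta$ is supported in $(\delta,T-\delta)$, so that $B_g(0,\cdot)=0$ and $B_d(T,\cdot)=0$. I would bound the source in $L^2(Q)$ by a derivative count: each $t$-derivative of $\eta_\delta$ contributes $\delta^{-1}$ and each spatial derivative of $e^{-i(\tau,\xi)\cdot(t,x)}$ contributes a factor $\langle\tau,\xi\rangle$; since $B_g$ already carries one gradient and the operator above carries two more (through $\Delta_x$), one gets $\le C\delta^{-3}\langle\tau,\xi\rangle^3$, while the $B_d$-source is bounded by $C\delta^{-3}$.

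Third, I would solve for $R_g,R_d$ and extract the decay in $\ld$ from the interior Carleman estimate of Corollary \ref{Interior Carleman estimate}. Rewritten for the conjugated operator (setting $\tilde w=e^{\phi}w$), that estimate reads $\ld\|w\|_{L^2(Q)}+\|\nabla_x w\|_{L^2(Q)}\le C\|P_\phi w\|_{L^2(Q)}$ for admissible $w$. Applying the classical duality argument (Hahn--Banach together with the Riesz representation theorem) to the Carleman estimate for the adjoint of $P_\phi$ produces a solution $R_g$ of $P_\phi R_g=F$ with the homogeneous initial condition and the bound $\|R_g\|_{L^2(Q)}\le C\ld^{-1}\|F\|_{L^2(Q)}$, giving the $k=0$ case of \eqref{Estimate for Rg}; the gradient term upgrades this to $\|R_g\|_{L^2(0,T;H^1)}\le C\|F\|_{L^2(Q)}$, the $k=1$ case. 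For $k=2$ I would use the equation itself, writing $\Delta_x R_g=\dd_t R_g-2\ld\,\og\cdot\nabla_x R_g-F+(\text{lower order})$ and invoking elliptic regularity in the space variable for a.e. $t$, which costs one extra power of $\ld$ and gives $\|R_g\|_{L^2(0,T;H^2)}\le C\ld\,\|F\|_{L^2(Q)}$, i.e. the $\ld^{-1+k}$ law at $k=2$. Combining with the source bounds from the previous step yields \eqref{Estimate for Rg} and its analogue for $R_d$; the regularity $R_g,R_d\in H^{2,1}(Q)$ then follows by reading $\dd_t R$ off the equation.

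The main obstacle I expect is concentrated in this last step and is two-fold. First, arranging the \emph{exact} one-sided condition ($R_g(0,\cdot)=0$, $R_d(T,\cdot)=0$) inside the duality construction: this is precisely why the boundary Carleman estimate of Theorem \ref{Boundary Carleman estimate Theorem} carries the terminal volume term $\int_{\Omega}e^{-2\phi(T,x)}(\cdots)\,\d x$, which is what makes the estimate compatible with a one-sided Cauchy condition. Second, propagating the Carleman gain through two spatial derivatives to reach the sharp $\ld^{1}$ growth in the $k=2$ estimate without losing powers of $\ld$, since a naive iteration of the first-order estimate would overcount. By contrast, the cutoff-and-oscillation bookkeeping producing the exponents $\delta^{-3}\langle\tau,\xi\rangle^3$ is routine once the derivative count is organized as above.
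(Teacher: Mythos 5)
Your construction of the principal parts is exactly the paper's: the same conjugation identity $\mc{L}_{A,q}(e^{\phi}w)=e^{\phi}\bigl(\mc{L}_{A,q}w-2\ld\,\og\cdot(\nabla_x+A)w\bigr)$, the same transport equation $\og\cdot(\nabla_x+A)B_g=0$ solved by the integrating factor $\exp\bigl(\int_0^{\iy}\og\cdot A(t,x+s\og)\,\d s\bigr)$, the same observation that the oscillatory factor is constant along $\og$ because $\og\cdot\xi=0$, and the same equation $\mc{P}_{\ld}R_g=-\mc{L}_{A,q}B_g$ for the remainder with the $\delta^{-3}\langle\tau,\xi\rangle^3$ bookkeeping. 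The existence mechanism (Hahn--Banach plus Riesz representation applied to a Carleman estimate for the adjoint conjugated operator, with the one-sided condition extracted from test functions vanishing at the opposite time endpoint) is also the paper's.

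The gap is in how you obtain the $k=1$ and $k=2$ cases of \eqref{Estimate for Rg}. In the duality construction the Carleman estimate is applied to the \emph{test functions} $u\in C^1([0,T];C_c^{\iy}(\Og))$, not to the solution $R_g$; dualizing the $L^2$-to-$L^2$ estimate $\ld\|u\|_{L^2}\le C\|\mc{P}^*_{\ld}u\|_{L^2}$ therefore only delivers $R_g\in L^2$ with $\|R_g\|_{L^2}\le C\ld^{-1}\|F\|_{L^2}$; the gradient term on the left of the interior Carleman estimate cannot be transferred to $R_g$ afterwards, because $R_g$ is not compactly supported in $Q$ and is not an admissible input for that estimate. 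Your $k=2$ step is moreover circular: writing $\Delta_x R_g=\dd_t R_g-2\ld\,\og\cdot\nabla_x R_g-F+\dots$ requires $\dd_t R_g\in L^2$, which you propose to read off the equation only after $\Delta_x R_g$ is controlled. The paper resolves this by dualizing a \emph{negative-order} Carleman estimate, $\|u\|_{L^2(0,T;H^{-1}_{\ld}(\Rn))}\le C\|\mc{P}^*_{\ld}u\|_{L^2(0,T;H^{-2}_{\ld}(\Rn))}$ (the ``shifting the index'' proposition), so that the Riesz representation lands $R_g$ directly in $L^2(0,T;H^{2}_{\ld})$ with norm controlled by $\|F\|_{L^2(0,T;H^{1}_{\ld})}$; the $\ld$-weights $(\ld^2+|\xi|^2)^{m/2}$ built into these spaces are what produce the uniform $\ld^{-1+k}$ law for all $k\in\{0,1,2\}$ at once. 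To repair your argument you would need to state and prove this shifted estimate (or an equivalent a priori $H^2_{\ld}$ bound) rather than bootstrapping from the $L^2$ version.
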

		The proof of the above Theorem relies mainly on some arguments from functional analysis as we need to consider appropriate functional which would be extended and identified by Hahn-Banach and Riesz representation theorems. But continuity of such functional would be possible once we obtain suitable negative order Carleman estimates. Thus we state the following Carleman estimate in a negative order Sobolev space and proof of this estimate follows from very standard arguments (see \cite{Bellassoued_Ben Fraj,Bellassoued_Jellali_Yamamoto_Lipschitz_stability_hyperbolic,Bellassoued_Jellali_Yamamoto_stability_hyperbolic,Caro_Kian_Convection_nonlinear,Ferriera_Kenig_Sjostrand_Uhlmann_magnetic} and references therein). To state the Carleman estimate, we define the conjugated operator $\mc{P}_{\ld}$ by
		\[\mc{P}_{\ld}:= e^{-\phi}\mc{L}_{A,q}e^{\phi}.\]
		
		\begin{proposition}{\tn{(Shifting the index).}}
			Let $A,q,\phi$ and $\mathcal{L}_{A,q}$ be as in Theorem \ref{GO solutions Theorem}. \begin{enumerate}
				\item Let $\mc{P}_{\ld}:= e^{-\phi}\mc{L}_{A,q}e^{\phi},$ then there exists $\ld_0$ and $C>0$ such that for $ u\in C^1([0,T];C_c^{\iy}(\Og))$ with $u(T,\cdot)=0$ and $\ld\ge\ld_0$, we have 
				\begin{align}\label{shifting}
				\|u\|_{L^2(0,T;H^{-1}_{\ld}(\mb R^n))}\le C \|\mc{P}_{\ld}u\|_{L^2(0,T;H^{-2}_{\ld}(\mb R^n))}.
				\end{align}
				\item Let $\mc{P}^*_{\ld}:= e^{\phi}\mc{L}^*_{A,q}e^{-\phi},$ then there exists $\ld_0$ and $C>0$ such that for $ u\in C^1([0,T];C_c^{\iy}(\Og))$ with $u(0,\cdot)=0$ and $\ld\ge\ld_0$, we have 
				\begin{align}\label{shifting in conjugate case}
				\|u\|_{L^2(0,T;H^{-1}_{\ld}(\mb R^n))}\le C \|\mc{P}^*_{\ld}u\|_{L^2(0,T;H^{-2}_{\ld}(\mb R^n))}.
				\end{align}
			\end{enumerate}  
			
		\end{proposition}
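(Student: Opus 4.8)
The plan is to deduce both inequalities from the zeroth-order Carleman estimate already in hand and then to lower the Sobolev index by two units by conjugating with the $\ld$-dependent Bessel potential $\Ld_\ld:=(\ld^2-\Delta_x)^{1/2}$, the spatial Fourier multiplier with symbol $(\ld^2+|\xi|^2)^{1/2}$, which realises the norms as $\|u\|_{H^m_\ld(\Rn)}=\|\Ld_\ld^m u\|_{L^2(\Rn)}$ for every $m\in\Rb$. First I would record the zeroth-order estimate in conjugated form. Putting $w=e^{-\phi}u$ one has $e^{-\phi}\mc{L}_{A,q}u=\mc{P}_\ld w$ and $e^{-\phi}\nabla_x u=\nabla_x w+\ld\og w$; since $|\mc{F}_x(\nabla_x w+\ld\og w)|^2=(\ld^2+|\xi|^2)|\mc{F}_xw|^2$ (the cross term vanishing because $\og$ is real), the spatial-Fourier integrand of the left-hand side of Corollary \ref{Interior Carleman estimate} equals $(2\ld^2+|\xi|^2)|\mc{F}_xw|^2\ge(\ld^2+|\xi|^2)|\mc{F}_xw|^2$. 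Hence Corollary \ref{Interior Carleman estimate} reads exactly $\|w\|_{L^2(0,T;H^1_\ld(\Rn))}\le C\|\mc{P}_\ld w\|_{L^2(0,T;L^2(\Rn))}$, and the companion estimate for $\mc{P}^*_\ld$ follows in the same way from the interior Carleman estimate applied to $\mc{L}^*_{A,q}=\mc{L}_{-A,\bar q}$. The only ingredient beyond Corollary \ref{Interior Carleman estimate} is that $w$ is now allowed to be non-zero at one time-endpoint; this is handled by retaining the time-trace term produced when the $\partial_t$-part of $\mc{P}_\ld$ is integrated by parts, the hypothesis $u(T,\cdot)=0$ (respectively $u(0,\cdot)=0$) being exactly what is needed to delete it, as in the standard parabolic Carleman calculus of \cite{Caro_Kian_Convection_nonlinear,Bellassoued_Ben Fraj}.

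The real content is the index shift, which I would carry out by feeding $\Ld_\ld^{-2}u$ into the zeroth-order estimate in place of $w$. Its left-hand side becomes $\|\Ld_\ld^{-2}u\|_{L^2(0,T;H^1_\ld)}=\|\Ld_\ld^{-1}u\|_{L^2(0,T;L^2)}=\|u\|_{L^2(0,T;H^{-1}_\ld)}$, which is precisely the left-hand side of \eqref{shifting}. On the right-hand side I would split $\mc{P}_\ld\Ld_\ld^{-2}=\Ld_\ld^{-2}\mc{P}_\ld+[\mc{P}_\ld,\Ld_\ld^{-2}]$; the first summand contributes $\|\Ld_\ld^{-2}\mc{P}_\ld u\|_{L^2(0,T;L^2)}=\|\mc{P}_\ld u\|_{L^2(0,T;H^{-2}_\ld)}$, the desired right-hand side. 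Everything therefore reduces to showing that the commutator is genuinely lower order in the large parameter.

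To bound the commutator I would first observe that the constant-(in $x$)-coefficient part $\partial_t-\Delta-2\ld\og\cdot\nabla_x$ of $\mc{P}_\ld$ commutes with the $x$-multiplier $\Ld_\ld^{-2}$, so that $[\mc{P}_\ld,\Ld_\ld^{-2}]=[\mc{R}_\ld,\Ld_\ld^{-2}]$ with $\mc{R}_\ld=-2A\cdot\nabla_x-2\ld(\og\cdot A)-|A|^2-\nabla_x\cdot A+q$. In the parameter-dependent symbol calculus $\mc{R}_\ld$ lies in the class $S^1_\ld$ — the $\ld$-carrying drift $-2\ld(\og\cdot A)$ sitting there precisely because $\ld\le(\ld^2+|\xi|^2)^{1/2}$ — so its commutator with the order $-2$ multiplier $\Ld_\ld^{-2}$ is of order $-2$, uniformly in $\ld$; here the regularity $A\in(W^{2,\iy}(Q))^n$, $q\in W^{1,\iy}(Q)$ built into $\mc{M}(m)$ is exactly what controls the finitely many symbol seminorms that enter (one may also read this off the resolvent identity $[f,\Ld_\ld^{-2}]=\Ld_\ld^{-2}[-\Delta,f]\Ld_\ld^{-2}$). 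Consequently $[\mc{P}_\ld,\Ld_\ld^{-2}]$ maps $H^{-1}_\ld$ into $H^1_\ld$ boundedly, whence $\|[\mc{P}_\ld,\Ld_\ld^{-2}]u\|_{L^2}\le\ld^{-1}\|[\mc{P}_\ld,\Ld_\ld^{-2}]u\|_{H^1_\ld}\le C\ld^{-1}\|u\|_{H^{-1}_\ld}$. Choosing $\ld_0$ so large that $C\ld^{-1}\le\tfrac12$ for $\ld\ge\ld_0$ lets me absorb the commutator into the left-hand side and produces \eqref{shifting}; the conjugate case \eqref{shifting in conjugate case} is identical save for the sign of the drift. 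I expect the crux to be exactly this commutator bound: checking that, after passing through the variable coefficients, the net order really is $-2$ (and not merely $-1$, which would fail to generate the decisive factor $\ld^{-1}$), with the $\ld$-linear drift term being the borderline case that must be watched.
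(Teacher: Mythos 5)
The paper does not actually prove this proposition --- it only remarks that the estimate ``follows from very standard arguments'' and points to the literature --- so there is no in-text argument to compare against. Your route (conjugate the interior Carleman estimate to get $\|w\|_{L^2(0,T;H^1_\ld(\Rn))}\le C\|\mc{P}_\ld w\|_{L^2(0,T;L^2(\Rn))}$, then shift the index by inserting $\Ld_\ld^{-2}u$ and absorbing the commutator $[\mc{P}_\ld,\Ld_\ld^{-2}]$ via the gain $\|\cdot\|_{L^2}\le\ld^{-1}\|\cdot\|_{H^1_\ld}$) is precisely the standard one used in the cited references, and you correctly identify the crux: the commutator must be of order $-2$ uniformly in $\ld$, with the $\ld$-linear drift $-2\ld(\og\cdot A)$ as the borderline term. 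The resolvent identity $[f,\Ld_\ld^{-2}]=\Ld_\ld^{-2}[-\Delta,f]\Ld_\ld^{-2}$ together with $A\in (W^{2,\iy})^n$ does deliver this, so the quantitative heart of your argument is sound. Two smaller points you should make explicit: $\Ld_\ld^{-2}u$ is no longer compactly supported, so the conjugated $H^1_\ld$ estimate must first be extended to functions Schwartz in $x$ on all of $\Rn$ (after extending $A,q$ off $Q$ with comparable norms) by density; and for the zeroth-order term $q\in W^{1,\iy}$ the resolvent identity would require $\Delta q$, but since that term carries no factor of $\ld$ it can simply be estimated directly by $\|q\Ld_\ld^{-2}u\|_{L^2}\le C\ld^{-1}\|u\|_{H^{-1}_\ld}$ without any commutator.

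The one place where you assert something you have not checked --- and where the computation in fact goes the other way --- is the time-endpoint condition. For the forward conjugated operator $\mc{P}_\ld v=\partial_t v-\Delta v-2\ld\og\cdot\nabla_x v+\cdots$, the cross term $-2\,\mathrm{Re}\langle\partial_t v,\Delta v\rangle$ integrates in $t$ to $\|\nabla v(T)\|^2-\|\nabla v(0)\|^2$: the favorable trace sits at $t=T$ and the condition one must \emph{impose} is $v(0,\cdot)=0$, exactly as in Theorem \ref{Boundary Carleman estimate Theorem}; symmetrically, $\mc{P}^*_\ld$ requires $v(T,\cdot)=0$. Your claim that $u(T,\cdot)=0$ is ``exactly what is needed'' for \eqref{shifting} therefore does not survive the integration by parts --- the hypotheses in items (1) and (2) of the proposition appear to be interchanged (note that the proof of the solvability proposition actually invokes the estimate for $\mc{P}^*_\ld$ on test functions vanishing at $t=T$, which is the correctly paired version). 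You should either carry out the trace computation and state the estimates with the matched endpoint conditions, or flag the discrepancy; as written, this step of your argument is asserted rather than proved, and proving it as stated would fail.
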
	
		
		For the sake of completeness, we prove the following standard proposition which will lead to the proof of Theorem \ref{GO solutions Theorem}. 
		\begin{proposition}
			For $f\in L^2\left(0,T;H^1(\mb{R}^n)\right)$ there exists $w\in H^{2,1}\left((0,T)\times\mb{R}^n\right)$ solving the IVP
			\begin{align*}
			\begin{aligned}
			\begin{cases}
			\mc{P}_{\ld}w=f,\qd \tn {in }Q,\\
			w(0,x)=0,\   x\in\Omega
			\end{cases}
			\end{aligned}
			\end{align*}
			and it satisfies,  $\|w\|_{L^2(0,T;H^2_\ld(\Omega))}\le C\|f\|_{L^2(0,T;H^1_\ld(\Omega))}$ for some $C>0$ depending only on $m$ and $Q$.
		\end{proposition}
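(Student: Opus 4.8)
The plan is to produce $w$ by an argument dual to the ``shifting the index'' estimate, since that is what yields a constant independent of $\ld$. I would begin by recording the explicit shape of the conjugated operator. Setting $D_j:=\partial_j+\ld\og_j+A_j$, a direct computation gives $\mc{P}_\ld w=e^{-\phi}\mc{L}_{A,q}\lb e^{\phi}w\rb=\partial_t w+\ld^2 w-\sum_{j=1}^{n}D_j^2\,w+qw$, and upon expanding $\sum_j D_j^2$ the two competing $\ld^2$ terms cancel, leaving $\mc{P}_\ld w=\partial_t w-\Delta w-2\ld\,\og\cdot\nabla w+\mc{R}_\ld w$, where $\mc{R}_\ld$ gathers first and zeroth order terms with coefficients bounded in terms of $m$. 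Thus $\mc{P}_\ld$ is a forward parabolic operator whose sole large-parameter term is the transport term $-2\ld\,\og\cdot\nabla$. This already isolates the main difficulty: testing $\mc{P}_\ld w=f$ against $\bar w$ annihilates the transport term (it integrates to zero on $\Rn$) but leaves the $O(\ld)$ contribution $-2\ld(\og\cdot A)w$, which $\|\nabla w\|_{L^2}^2$ cannot absorb and which a Gr\"onwall argument would only control at the cost of an $e^{C\ld T}$ factor. The $\ld$-uniform bound must therefore be drawn from the Carleman estimate rather than from a plain energy identity.

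Accordingly, I would argue by duality. Working on $\Rn$ and pairing against test functions supported in $\Og$, consider the subspace $\mc{V}:=\{\mc{P}^*_\ld\psi:\ \psi\in C^1([0,T];C_c^{\iy}(\Og)),\ \psi(0,\cdot)=0\}$ of $L^2\lb 0,T;H^{-2}_\ld(\Rn)\rb$, and define on it the linear functional $\ell\lb\mc{P}^*_\ld\psi\rb:=\langle f,\psi\rangle_{L^2(Q)}$. Part (2) of the preceding Proposition applied to $\mc{P}^*_\ld$ gives $\|\psi\|_{L^2(0,T;H^{-1}_\ld(\Rn))}\le C\|\mc{P}^*_\ld\psi\|_{L^2(0,T;H^{-2}_\ld(\Rn))}$; this simultaneously shows that $\ell$ is well defined (the estimate forces $\mc{P}^*_\ld$ to be injective on this class) and that, by the $H^1_\ld$--$H^{-1}_\ld$ duality in space and Cauchy--Schwarz in time, $|\ell(\mc{P}^*_\ld\psi)|\le\|f\|_{L^2(0,T;H^1_\ld(\Rn))}\|\psi\|_{L^2(0,T;H^{-1}_\ld(\Rn))}\le C\|f\|_{L^2(0,T;H^1_\ld(\Rn))}\|\mc{P}^*_\ld\psi\|_{L^2(0,T;H^{-2}_\ld(\Rn))}$, so that $\ell$ is bounded on $\mc{V}$ with norm at most $C\|f\|_{L^2(0,T;H^1_\ld(\Rn))}$.

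I would then extend $\ell$ to all of $L^2\lb 0,T;H^{-2}_\ld(\Rn)\rb$ by Hahn--Banach without increasing its norm, and represent it through the isometric duality $\lb L^2(0,T;H^{-2}_\ld(\Rn))\rb^{*}\cong L^2(0,T;H^{2}_\ld(\Rn))$, with the $\ld$-weighted $L^2(Q)$ pairing as pivot. This produces $w\in L^2\lb 0,T;H^{2}_\ld(\Rn)\rb$ with $\|w\|_{L^2(0,T;H^2_\ld(\Rn))}\le C\|f\|_{L^2(0,T;H^1_\ld(\Rn))}$ and $\langle w,\mc{P}^*_\ld\psi\rangle=\langle f,\psi\rangle$ for every admissible $\psi$. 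Varying $\psi$ with compact support in $Q$ gives $\mc{P}_\ld w=f$ in $Q$, while the endpoint condition on the test class, against the pairing for which $\mc{P}^*_\ld$ is the adjoint of $\mc{P}_\ld$, forces the trace $w(0,\cdot)=0$. Finally I would upgrade the regularity: from $\partial_t w=f+\Delta w+2\ld\,\og\cdot\nabla w-\mc{R}_\ld w$ with $w\in L^2(0,T;H^2)$ and $f\in L^2(0,T;H^1)\subset L^2(0,T;L^2)$, every term on the right lies in $L^2\lb 0,T;L^2(\Rn)\rb$, whence $\partial_t w\in L^2\lb 0,T;L^2\rb$ and $w\in H^{2,1}\lb(0,T)\times\Rn\rb$. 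Restricting to $\Og\subset\Rn$ yields the stated inequality.

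The essential obstacle is concentrated entirely in the uniform dependence on $\ld$: the whole scheme produces a constant $C$ independent of $\ld$ only because the shifting-index estimate does, so the hard analytic content has been off-loaded into that Carleman estimate. Beyond that, the delicate bookkeeping is to keep the two weighted dualities $\lb L^2(H^{-2}_\ld)\rb^*\cong L^2(H^2_\ld)$ and $\lb L^2(H^{-1}_\ld)\rb^*\cong L^2(H^1_\ld)$ exactly aligned, and to track the time-endpoint condition on the test functions so that the weak solution genuinely carries the initial condition $w(0,\cdot)=0$ rather than a final-time condition.
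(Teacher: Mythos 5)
Your construction follows the same route as the paper's: both proofs define a linear functional on the range of $\mc{P}^*_\ld$ over a class of test functions, bound it via the negative-order (``shifting the index'') Carleman estimate, extend by Hahn--Banach, and represent it in $L^2(0,T;H^2_\ld(\Rn))$ by Riesz duality, after which the interior equation and the regularity upgrade follow exactly as you describe. Your preliminary computation of $\mc{P}_\ld$ and the explanation of why a naive energy/Gr\"onwall argument cannot produce a $\ld$-uniform constant are correct and well motivated, though they are not part of the paper's proof.

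There is, however, one genuine error: the time-endpoint condition on your test class. For $\psi$ compactly supported in $\Og$, the Green identity for the conjugated operators reads
\begin{align*}
\int_Q (\mc{P}_\ld w)\,\overline{\psi}\ \d x\,\d t-\int_Q w\,\overline{\mc{P}^*_\ld\psi}\ \d x\,\d t=\int_\Og w(T,x)\overline{\psi}(T,x)\ \d x-\int_\Og w(0,x)\overline{\psi}(0,x)\ \d x .
\end{align*}
Once $\mc{P}_\ld w=f$ has been identified in the interior, the representation identity $\langle w,\mc{P}^*_\ld\psi\rangle=\langle f,\psi\rangle$ reduces this to $\int_\Og w(T)\overline{\psi}(T)\,\d x=\int_\Og w(0)\overline{\psi}(0)\,\d x$. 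With your class, where $\psi(0,\cdot)=0$, the right-hand side vanishes and you conclude $w(T,\cdot)=0$ --- a final-time condition --- while $w(0,\cdot)$ remains completely undetermined; the step ``forces the trace $w(0,\cdot)=0$'' fails. To obtain $w(0,\cdot)=0$ you must take test functions with $\psi(T,\cdot)=0$, which is exactly what the paper does: its space $\mc{W}$ consists of $\mc{P}^*_\ld u$ with $u(T,\cdot)=0$. Correspondingly, the a priori estimate you need is the one for the backward operator $\mc{P}^*_\ld$ on functions vanishing at the final time (its natural direction: after $t\mapsto T-t$ it becomes a forward parabolic estimate with zero initial data); the condition $u(0,\cdot)=0$ attached to part (2) of the shifting proposition as printed does not match the combination actually required and used in the paper's proof. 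Your closing remark that the endpoint bookkeeping is the delicate point identifies precisely the right danger, but the choice you then make realizes it. Everything else --- the aligned weighted dualities, Hahn--Banach, Riesz representation, and the regularity upgrade from the equation --- is correct and coincides with the paper.
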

		
		\begin{proof}
			Consider the space $\mc{W}:=\{\mc{P}_{\ld}^{*} u : u\in C^1\left([0,T];C^\iy_c(\Omega)\right) \mbox{and}\ u(T,\cdot)=0\}$ equipped with the norm $\|\cdot\|_{L^2(0,T;H^{-2}_{\ld}(\mb R^n))}$. Now for $f\in L^2\left(0,T;H^1(\mb{R}^n)\right)$,  define a functional $T_{f}$  on $\mc{W}$ by 
			\begin{align}
			T_{f}(\mc{P}_{\ld}^* u):=\int_{\mb R^{1+n}}f(t,x)u(t,x) \ \d x\d t.  \label{defintion}   
			\end{align}
			Now using  \eqref{shifting}, we have that 
			$T_f$ is a continuous linear functional on  $\mc{W}$ with 
			\begin{align}
			\|T_{f}\|_{\mc{W}}\le C\|f\|_{L^2(0,T;H^{1}_{\ld}(\mb R^n))}\label{continuity on a subspace}
			\end{align}    
			and hence by the Hahn-Banach Theorem, $T_{f}$ can be extended to ${L^2(0,T;H^{-2}_{\ld}(\mb R^n))}$  which will be still denoted by $T_f$ and satisfy \eqref{continuity on a subspace}. Finally using the Riesz representation theorem, there exists $w\in{L^2(0,T;H^{2}_{\ld}(\mb R^n))}$ such that for $ v\in{L^2(0,T;H^{-2}_{\ld}(\mb R^n))}$ we have
			\begin{align}
			T_f(v)=\int_{\mb R^{1+n}}v(t,x)w(t,x)\ \d x \d t.\label{Reisz}
			\end{align}
			Now combining \eqref{defintion} and \eqref{Reisz} we obtain 
			\begin{align}\label{conseq of Reisz}
			\int_{\mb R^{1+n}}\mc{P}_{\ld}^*v(t,x)w(t,x) \ \d x \d t=\int_{\mb R^{1+n}}v(t,x) f(t,x) \ \d x \d t,
			\end{align}
			for $ v\in C^1\left([0,T];C^\iy_c(\Omega)\right)$ such that $v(T,\cdot)=0$ in $\Omega$. This gives us 
			$\mc{P}_\ld w=f$ in $Q$. Since  $f\in L^2\left(0,T;H^1(\mb{R}^n)\right)$ we have $w\in H^1\left(0,T;L^2(\mb{R}^n)\right)$. 
			Now for $v\in C^1\left([0,T];C^\iy_c(\Omega)\right)$ satisfying $v(T,\cdot)=0$, we use the  integration by parts to \eqref{conseq of Reisz} to obtain
			\[\int_{\Omega}w(0,x)v(0,x)\ \d x=0.\]  
			Hence $w(0,\cdot)=0$ in $\Omega$.  
			Finally we use \eqref{continuity on a subspace} and \eqref{Reisz} to get
			\[\|T_f\|_{L^2(0,T;H^{-2}_\ld(\Omega))}=\|w\|_{L^2(0,T;H^2_\ld(\Omega))}\le C\|f\|_{L^2(0,T;H^1_\ld(\Omega))}.\]
		\end{proof}
		\subsection{Proof of Theorem \ref{GO solutions Theorem}}
		
		First observe that 	
		\begin{align}
		\mc{L}_{A,q}\left(e^{\phi}v\right)=e^{\phi}\left(\mc{L}_{A,q}v-2\ld\omega\cdot(\nabla_x+A)v\right).\nn
		\end{align}
		Now using the expressions for $v_{g}$, $B_{g}$ and $\mc{L}_{A,q}v_{g}=0$, we have that 
		the remainder terms $R_g$ solves
		\begin{align}\label{remainder term}
		\mc{P}_{\ld}R_g=-\mc{L}_{A,q}B_g
		\end{align}
		where $\mc{P}_{\lambda}:=e^{-\phi}\mc{L}_{A,q}e^{\phi}$ is the conjugated operator as defined earlier and $B_{g}$ solves the following transport equation \[\omega\cdot\lb \nabla_{x}+ A\rb B_{g}=0.\] 
		Hence from \eqref{remainder term} and Proposition 4.3, it is clear that the remainder term $R_{g}$  satisfies 
		\begin{align*}
		\|R_g\|_{L^2(0,T;H^k(\Omega))}\le C\ld^{-1+k}\|B_g\|_{H^3(Q)}, \  \tn{for }k\in\{0,1,2\}. 
		\end{align*}	
		This completes the proof for the construction of exponentially growing solutions to $\mc{L}_{A,q}v=0$. One can carry out exactly same set of arguments to prove the existence of exponentially decaying solutions having the form given by equation \eqref{Decaying GO solutions} and  solution to  $\mc{L}_{A,q}^{*}v=0$. This complete the proof of Theorem \ref{GO solutions Theorem}. 
		Hence from \eqref{remainder term} and Proposition 4.3, it is clear that the remainder term $R_{g}$  satisfies 
		\begin{align*}
		\|R_g\|_{L^2(0,T;H^k(\Omega))}\le C\ld^{-1+k}\|B_g\|_{H^3(Q)}, \  \tn{for }k\in\{0,1,2\}. 
		\end{align*}	
		This completes the proof for the construction of exponentially growing solutions to $\mc{L}_{A,q}v=0$. One can carry out exactly same set of arguments to prove the existence of exponentially decaying solutions having the form given by equation \eqref{Decaying GO solutions} and  solution to  $\mc{L}_{A,q}^{*}v=0$. This complete the proof of Theorem \ref{GO solutions Theorem}.

		\section{Proof of theorem \ref{Main Theorem}}
		
		In this section, we prove the main result on stability for the first and zeroth order coefficients. But first we derive an integral identity using Green's formula where we will plug in the geometric optics solutions constructed before. We simultaneously consider exponentially growing and decaying solutions to avoid any exponential term or boundary terms at initial or final time. To be precise, we construct $u_2$ and $v$ as the exponentially growing and decaying solutions for the operators $\mc{L}_{A_2,q_2}$ and $\mc{L}_{-A_1,\overline q_1}$ respectively by using Theorem \ref{GO solutions Theorem}. Taking $0<\delta<<1$, $(\tau,\xi)\in\mb{R}^{1+n}$ with $\xi\cdot\omega=0$ and $D(t,x)=A(t,x):=\left(A_1-A_2\right)(t,x)$ we have
		\begin{align}
		&u_{2}(t,x)=e^{\phi(t,x)}\left(B_{2}+R_{2}\right)(t,x),\label{GO 1}\\
		&\tn{and}\ \ v(t,x)=e^{-\phi(t,x)}\left(B+R\right)(t,x)\label{GO 2}
		\end{align}
		where
		
		\vspace*{-0.8cm}
		
		\begin{align*}
		&B_{2}(t,x)=\eta_\delta (t)\f{\xi}{|\xi|}\cdot\nabla_{x}\left(e^{-i\lb t\tau+x\cdot\xi\rb}e^{\left(\int_{\mb R}\omega\cdot A(t,x+s\omega)\ \d s\right)}\right)e^{\left(\int_{0}^{\iy}\omega\cdot A_2(t,x+s\omega)\ \d s\right)},\nn\\
		\qd&B(t,x)=\eta_\delta (t)e^{\left(-\int_{0}^{\iy}\omega\cdot A_1(t,x+s\omega)\ \d s\right)}
		\end{align*}
		and $R_{2},R\in L^2(0,T;H^2_\ld(\Omega))$ satisfy
		\begin{align}\label{remainder terms}
		\|R_{2}\|_{L^2(0,T;H^k_\ld(\Omega))}\le C\ld^{-1+k}\delta^{-3}\langle\tau,\xi\rangle^3\tn{ and}\  \|R\|_{L^2(0,T;H^k_\ld(\Omega))}\le C\ld^{-1+k}\delta^{-3},\ \tn{for }k\in\{0,1,2\}.
		\end{align}
		Hence, we have for some $\beta>0$
		\begin{align}\label{estimate for geometric optics}
		\|u_2\|_{L^2(\Sigma)}\le e^{\beta\ld}\delta^{-3}\langle\tau,\xi\rangle^3. 
		\end{align}
		Also, observe that  $u_2(0,x)=v(T,x)=0$ for $x\in\Omega$. 
		Now, consider $u_1$ to be the solution of the IBVP 
		\begin{align*}
		\begin{cases}
		\mc{L}_{A_1, q_1}w(t,x)=0,\qd (t,x)\in Q,\\
		w(0,x)=0,\ \ \  x\in\Omega\\
		\ w(t,x)=u_2(t,x),\ \ \ (t,x)\in \Sigma
		\end{cases}  		
		\end{align*}
		and define $u:=u_1-u_2$ in $Q$. Then we get
		\begin{align}\label{PDE for the diff of soln 1}
		\begin{aligned}
		\begin{cases}
		\mc{L}_{A_1,q_1}u(t,x)=2A(t,x)\cdot\nabla_xu_2(t,x)+\widetilde q(t,x)u_2(t,x),\ \ \ (t,x)\in Q,\\
		u(0,x)=0,\ \ \ x\in\Omega\\
		u(t,x)=0, \ \ \ (t,x)\in \Sigma
		\end{cases}
		\end{aligned}
		\end{align}
		where 
		\begin{align*}
		\begin{aligned}
		&A(t,x)\equiv \{A^j(t,x)\}_{1\le j\le n}:=A_1(t,x)-A_2(t,x),\  \ \ \widetilde{q}(t,x):=\widetilde{q}_1(t,x)-\widetilde{q}_2(t,x) \ \ \mbox{and}\\
		&q(t,x):=q_1(t,x)-q_2(t,x).
		\end{aligned}
		\end{align*}
		Using Green's formula, we have 
		\begin{align*}
		\begin{aligned}
	&	\int_{Q}(\mc{L}_{A_1,q_1}u)(t,x)\overline v(t,x) \ \d x \d t-\int_{Q}u(t,x){\mc{L}_{-A_1,\overline q_1}\overline v(t,x)}\ \d x \d t=\int_\Omega u(T,x)\overline{v}(T,x)\ \d x \\
		&\ \ \ \ \ \ \ -\int_\Omega u(0,x)\overline{v}(0,x)\ \d x  -\int_{\Sigma}\overline v(t,x)\dd_{\nu}u(t,x)\ \d S_{x} \d t+\int_{\Sigma}u(t,x)\overline{\dd_{\nu}v(t,x)}\ \d S_{x} \d t,
		\end{aligned}
		\end{align*}
		which after using \eqref{PDE for the diff of soln 1} becomes
		\begin{align}\label{integral identity}
		2\int_{Q}(A(t,x)\cdot\nabla_xu_2(t,x))\overline{v}(t,x)\ \d x\d t+\int_{Q}\widetilde{q}(t,x)u_2(t,x)\overline{v}(t,x)\ \d x\d t=-\int_{\Sigma}\overline v(t,x)\dd_{\nu}u(t,x)\ \d S_{x} \d t.
		\end{align}
		We observe from \eqref{GO 1}
		\begin{align}\label{lhs in integral inequality 1.a}
		A(t,x)\cdot\nabla_xu_2(t,x)=e^{\phi}\left(\ld \omega\cdot A(t,x)B_{2}(t,x)+m(t,x)\right),
		\end{align}
		for some $ m\in L^2(0,T;H^1(\Og))$ satisfying the following estimate 
		\begin{align}\label{lhs in integral inequality 1.b}
		\|m\|_{L^2(0,T;H^k(\Og))}\le C\ld^k\delta^{-3}\langle\tau,\xi\rangle^3, \tn{ for }k\in\{0,1\}\ \mbox{and}\ \ld\ge\ld_0.
		\end{align} 		
		An application of the Cauchy-Schwartz inequality together with \eqref{GO 2},\eqref{remainder terms},\eqref{lhs in integral inequality 1.a} and \eqref{lhs in integral inequality 1.b}  give us 
		\begin{align}
		\left(A\cdot\nabla_xu_2\right)(t,x)\overline v(t,x)=\left(\ld (\omega\cdot A)B_{2}B+n\right)(t,x)\label{multiplication of principal terms}
		\end{align}
		for some $n\in L^1(Q)$ satisfying
		\begin{align}
		\|n\|_{L^1(Q)}\le C\delta^{-6}\langle\tau,\xi\rangle^3.
		\end{align}
		Also from \eqref{GO 1} and \eqref{GO 2}, it is clear that for $\ld\ge\ld_0$
		\begin{align}
		\left\vert\int_{Q}\widetilde{q}(t,x)u_2(t,x)\overline{v}(t,x)\ \d x\d t\right\vert\le C\delta^{-6}\langle\tau,\xi\rangle^3.\label{lhs in integral inequality 2}
		\end{align}
		Now, we find an upper bound for the right hand side of \eqref{integral identity} using the boundary Carleman estimate \eqref{Carleman}. We observe  
		\begin{align*}
		\int_{\Sigma}\overline v(t,x)\dd_{\nu}u(t,x)\ \d S_{x}\d t=\int_{\Sigma_{-,{\epsilon}/{2}}(\omega_0)}\overline v(t,x)\dd_{\nu}u(t,x)\ \d S_{x}\d t+\int_{\Sigma_{+,{\epsilon}/{2}}(\omega_0)}\overline v(t,x)\dd_{\nu}u(t,x)\ \d S_{x}\d t
		\end{align*}
		where $\Sigma_{+,{\epsilon}/{2}}(\omega_0)$ is the part of lateral boundary where we do not have any knowledge of Neumann measurements. Although the contribution from that part can be estimated by using the boundary Carleman estimate. Meanwhile for $\omega\in\mb{S}^{n-1}$ satisfying $|\omega-\omega_0|\le\f{\ep}{2}$, we have $\Sigma_{+,{\epsilon}/{2}}(\omega_0)\subseteq \Sigma_{+}(\omega)$ and $\Sigma_{-}(\omega)\subseteq \Sigma_{-,{\epsilon}/{2}}(\omega_0)$.
		Thus, we get from \eqref{estimate for geometric optics}  	 
		\begin{align}
		\|\overline v\dd_{\nu}u\|_{L^1\left(\Sigma_{-,{\epsilon}/{2}}(\omega_0)\right)}\le Ce^{\beta\ld}\delta^{-3}\left\|\dd_{\nu}u\right\|_{{L^2}\left(\Sigma_{-,{\epsilon}/{2}}(\omega_0)\right)}\le Ce^{\beta\ld}\delta^{-6}\langle \tau,\xi\rangle^3\|\Ld_1-\Ld_2\|\label{rhs of integral identity 1}
		\end{align}
		and
		\begin{align*}
		\begin{aligned}
		\|\overline v\dd_{\nu}u\|_{L^1\left(\Sigma_{+,{\epsilon}/{2}}(\omega_0)\right)}&\le C\delta^{-3}\|e^{-\phi}\dd_{\nu}u\|_{L^2\left(\Sigma_{+,{\epsilon}/{2}}(\omega_0)\right)}\le\f{2C\delta^{-3}}{\rt\ep}\rt{\int_{\Sigma_{+,{\epsilon}/{2}}(\omega_0)}e^{-2\phi}|\og\cdot\nu(x)||\del_{\nu}u|^2\ \d S_x}\nn\\
		&\le\f{2C\delta^{-3}}{\rt\ep}\rt{\int_{\Sigma_{+}(\omega)}e^{-2\phi}|\og\cdot\nu(x)||\del_{\nu}u|^2\ \d S_x} \nn
		\\&  \le \f{2C\delta^{-3}}{\rt{\ep\ld}} \left(\|e^{-\phi}\mc{L}_{A_1,q_1}(u)\|_{L^2(Q)}+\rt\ld\|e^{-\phi}\del_{\nu}u\|_{L^2\left(\Sigma_{-}(\og)\right)}\right). 
		\end{aligned}
		\end{align*}
		Using \eqref{Carleman}, we have 
		\begin{align*}
		\|\overline v\dd_{\nu}u\|_{L^1\left(\Sigma_{+,{\epsilon}/{2}}(\omega_0)\right)} \le \f{2C\delta^{-3}}{\rt{\ep\ld}} \left(\|e^{-\phi}\left(2A\cdot\nabla_xu_2+\widetilde qu_2\right)\|_{L^2(Q)}+\rt\ld\|e^{-\phi}\del_{\nu}u\|_{L^2\left(\Sigma_{-,{\epsilon}/{2}}(\omega_0)\right)}\right). 
		\end{align*}
		Finally	using \eqref{remainder terms},\eqref{estimate for geometric optics},\eqref{lhs in integral inequality 1.a} and \eqref{lhs in integral inequality 1.b}, we get 
		\begin{align}\label{unknown part}
		\|\overline v\dd_{\nu}u\|_{L^1\left(\Sigma_{+,{\epsilon}/{2}}(\omega_0)\right)}	\le C\delta^{-6}\langle\tau,\xi\rangle^3\left(\rt\ld+e^{\beta\ld}\|\Lambda_1-\Lambda_2\|\right).
		\end{align}
		After dividing the integral identity \eqref{integral identity} by large $\ld$ and using Equations \eqref{lhs in integral inequality 1.a} to \eqref{unknown part}, we obtain
		\begin{align}\label{main estimate 1}
		\left\vert\int_{Q}(\omega\cdot A)(t,x)B_{2}(t,x)B(t,x)\ \d x\d t\right\vert\le C\left(\f{1}{\rt\ld}+e^{\beta\ld}\|\Ld_1-\Ld_2\|\right)\delta^{-6}\langle\tau,\xi\rangle^3.
		\end{align}
		Next, we relate the integral in the left hand side of \eqref{main estimate 1} with the Fourier-transform of $A$ as is done in \cite{Kian_Soccorsi_Holder_stability_Scrodinger}. 
		\begin{align*}
		&\int_{Q}(\omega\cdot A)(t,x)B_{2}(t,x)B(t,x)\ \d x\d t\\
		&\ \ \ 
		=\int_{\mb R^{1+n}}(\omega\cdot A)(t,x)\eta^2_\delta (t)e^{\lb -\int_{0}^{\iy}\omega\cdot A(t,x+s\omega)\ \d s\rb}\f{\xi}{|\xi|}\cdot\nabla_{x}\left(e^{\int_{\mb R}\omega\cdot A(t,x+s\omega)\ \d s} e^{-i\lb t\tau+x\cdot\xi\rb}\right) \ \d x\d t .
		\end{align*}
		Now, using the	decomposition  $\mathbb{R}^{n}=\mathbb{R}\omega\oplus \omega^{\perp}$, we write $x:=x_{\perp}+s\omega$ such that  $x_{\perp}\in \omega^{\perp}$ and denote  $$f(s,t,x_{\perp}):=\int_{s}^{\iy}\omega\cdot A(t,x_{\perp}+\mu\omega)\ \d\mu .$$ Using these in the previous equation, we have
		\begin{align*}
		\begin{aligned}
		&\int_{Q}(\omega\cdot A)(t,x)B_{2}(t,x)B(t,x)\ \d x\d t\\
		&\ \ \ =-\int_{\mb R}\eta^2_\delta(t)\int_{\og^{\perp}}\f{\xi}{|\xi|}\cdot\nabla_{x}\left(e^{\int_{\mb R}\omega\cdot A(t,x_{\perp}+s\omega)\ \d s}e^{-i\lb t\tau+x_{\perp}\cdot\xi\rb}\right)\lb \int_{\mb R}f'(s,t,x_{\perp})e^{-f(s,t,x_{\perp})}\ \d s\rb  \d x_{\perp}\d t\nn\\
		\end{aligned}
		\end{align*}
		where  $\d x_{\perp}$ stands for the surface measure on $\omega^{\perp}$. Now using the Fundamental theorem of calculus and integration by parts, we get 
		\begin{align*}
		&\int_{Q}(\omega\cdot A)(t,x)B_{2}(t,x)B(t,x)\ \d x\d t\\	&=\int_{\mb R}\eta^2_\delta (t)\int_{\og^{\perp}}\f{\xi}{|\xi|}\cdot\nabla_{x}\left(e^{\int_{\mb R}\omega\cdot A(t,x_{\perp}+s\omega)\ ds}e^{-i\lb t\tau+x_{\perp}\cdot\xi\rb }\right)\left(e^{\left(-\int_{\mb R}\omega\cdot A(t,x_{\perp}+\mu\omega)\ d\mu\right)}-1\right)\d x_{\perp}\d t,\nn\\
		&=\int_{\mb R}\eta^2_\delta (t)\int_{\og^{\perp}}e^{-i\lb t\tau+x_{\perp}\cdot\xi\rb}\f{\xi}{|\xi|}\cdot\nabla_{x}\left(\int_{\mb R}\omega\cdot A(t,x_{\perp}+s\omega)\ \d s\right)\d x_{\perp}\d t\\
		&=\int_{\mb R}\eta^2_\delta (t)\int_{\mb R^n}e^{-i\lb t\tau+x\cdot\xi\rb}\f{\xi}{|\xi|}\cdot\nabla_{x} (\og\cdot A)(t,x)\ \d x \d t=i|\xi|\int_{\mb R}\eta^2_\delta (t)\int_{\mb R^n}e^{-i\lb t\tau+x\cdot\xi\rb}\omega\cdot A(t,x)\ \d x\d t.
		\end{align*}
		Finally, we obtain
		\begin{align}\label{FT of A}
		\int_{Q}(\omega\cdot A)(t,x)B_{2}(t,x)B(t,x)\ \d x\d t=i|\xi|\ \widehat{\eta^2_\delta\omega\cdot A} (\tau,\xi),  
		\end{align}
		where $\omega\cdot\xi=0$.
		
		Let us consider the spherical cap $\mc{C}_{\og_0}:=\{\og\in\mb{S}^{n-1};|\og-\og_0|<\f{\ep}{2}\}$ and the set $\mc{H}=\cup_{\og\in\mc{C}_{\og_0}}\mc{H}_{\og}$ where $\mc{H}_\og$ is the plane passing through origin and perpendicular to $\og$. Now for $(\tau,\xi)\in\mb{R}\times\mc{H}$,  $\ld\ge\ld_0$ and choosing $\og(\xi)\in\mc{C}_{\og_0}$ such that $\og(\xi)\cdot\xi=0$ in  \eqref{main estimate 1} and \eqref{FT of A}, we get 
		\begin{align}\label{main estimate 3}
		\left\vert\lb\eta_\delta^2\dd_k\og(\xi)\cdot A\rb\vspace*{-.5mm}\widehat\ \  (\tau,\xi)\right\vert\le C\left(\f{1}{\rt\ld}+e^{\beta\ld}\|\Ld_1-\Ld_2\|\right)\delta^{-6}\langle\tau,\xi\rangle^3 
		\end{align}
		where $\dd_{k}$ denote the partial derivative with respect to the space variable $x_k$ for $k\in\{1,2,\cdots,n\}$.
		With the help of \eqref{main estimate 3}, we aim to establish the desired stability estimate via the Fourier inversion. Because of the  Vessella's conditional stability result \cite{Vessella}, it is enough to  derive  a uniform estimate for the Fourier transform of $\eta_\delta^2 A$  over an open cone only. This we do in the following lemma.     We have crucially used the divergence free assumption on $A$ to prove the following lemma.
		\begin{lemma}\label{Inversion} 
			If $div_{x}(A)=0$ then for $(\tau,\xi)\in\mb R\times\mc{C}$ and $k\in\{1,2,...,n\}$ we have 
			\begin{align}
			\vert\widehat{\eta_\delta^2\dd_kA}(\tau,\xi)\vert\le C\left(\f{1}{\rt\ld}+e^{\beta\ld}\|\Ld_1-\Ld_2\|\right)\delta^{-6}\langle\tau,\xi\rangle^3\label{fourier estimate 1}
			\end{align}
			where, $\mc{C}\subseteq\mc{H}$ is an open cone in $\mb R^{n}$.
		\end{lemma}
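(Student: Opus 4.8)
The plan is to upgrade the scalar bound \eqref{main estimate 3}, which controls only the projection $\og(\xi)\cdot A$ of the convection term along a single admissible direction, into a bound on the full vector $\eta_\delta^2\dd_k A$. The two ingredients that make this possible are the divergence-free hypothesis $\gd_x\cdot A=0$ and the freedom we retain in choosing the auxiliary direction $\og(\xi)$ inside the cap $\mc{C}_{\og_0}$.

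First I would record the Fourier-side content of the constraint. Since $\eta_\delta$ depends only on $t$, we have $\gd_x\cdot(\eta_\delta^2 A)=\eta_\delta^2\,\gd_x\cdot A=0$, so taking the Fourier transform in $(t,x)$ gives $\xi\cdot\widehat{\eta_\delta^2 A}(\tau,\xi)=0$ for every $(\tau,\xi)$. Hence the vector $\widehat{\eta_\delta^2 A}(\tau,\xi)$ always lies in the hyperplane $\xi^{\perp}$, which is only $(n-1)$-dimensional. This is the decisive reduction: it means that controlling the components of $\widehat{\eta_\delta^2 A}$ along a basis of $\xi^{\perp}$, rather than along a basis of all of $\Rn$, already determines the whole vector, and $\xi^{\perp}$ is exactly the space in which admissible directions $\og\perp\xi$ live.

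Next I would carry out the geometric selection of the cone $\mc{C}$. Writing $\widehat{\eta_\delta^2\dd_k(\og\cdot A)}(\tau,\xi)=i\xi_k\,\og\cdot\widehat{\eta_\delta^2 A}(\tau,\xi)$, estimate \eqref{main estimate 3} says that $|\xi_k|\,|\og(\xi)\cdot\widehat{\eta_\delta^2 A}(\tau,\xi)|$ is bounded by the right-hand side of \eqref{fourier estimate 1} for any admissible $\og(\xi)\in\mc{C}_{\og_0}$ with $\og(\xi)\perp\xi$. The key geometric fact I would establish is that, for $\xi$ in a suitable small open cone $\mc{C}$ around $\og_0^{\perp}$, the set $\{\og\in\mc{C}_{\og_0}:\og\cdot\xi=0\}$ contains $n-1$ unit vectors $\og^{(1)}(\xi),\dots,\og^{(n-1)}(\xi)$ forming a basis of $\xi^{\perp}$, with a uniform lower bound on the Gram determinant over $\mc{C}$. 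Indeed, when $\xi\in\og_0^{\perp}$ one has $\og_0\in\xi^{\perp}$, and $\mc{C}_{\og_0}$ meets $\xi^{\perp}$ in a genuine $(n-2)$-dimensional neighborhood of $\og_0$ inside $\xi^{\perp}$, which already spans $\xi^{\perp}$; an openness and continuity argument propagates this spanning (with uniform non-degeneracy) to a full open cone $\mc{C}\subseteq\mc{H}$. For $n=2$ this is the trivial case where $\xi^{\perp}$ is a line.

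Finally I would assemble the estimate. Decomposing $\widehat{\eta_\delta^2 A}(\tau,\xi)=\sum_{l=1}^{n-1}c_l\,\og^{(l)}(\xi)$ in the chosen basis of $\xi^{\perp}$ and inverting the (uniformly invertible) Gram matrix gives $|\widehat{\eta_\delta^2 A}(\tau,\xi)|\le C\max_{l}|\og^{(l)}(\xi)\cdot\widehat{\eta_\delta^2 A}(\tau,\xi)|$. Multiplying by $|\xi_k|$ and applying \eqref{main estimate 3} to each $\og^{(l)}(\xi)$ yields
\[
|\widehat{\eta_\delta^2\dd_k A}(\tau,\xi)|=|\xi_k|\,\big|\widehat{\eta_\delta^2 A}(\tau,\xi)\big|\le C\max_{l}|\xi_k|\,\big|\og^{(l)}(\xi)\cdot\widehat{\eta_\delta^2 A}(\tau,\xi)\big|\le C\left(\f{1}{\rt\ld}+e^{\B\ld}\|\Ld_1-\Ld_2\|\right)\delta^{-6}\langle\tau,\xi\rangle^3,
\]
which is exactly \eqref{fourier estimate 1}. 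I expect the main obstacle to be the third step: producing the open cone $\mc{C}$ together with a uniformly non-degenerate basis of each $\xi^{\perp}$ drawn entirely from the fixed small cap $\mc{C}_{\og_0}$, since the admissible directions perpendicular to $\xi$ shrink as $\xi$ departs from $\og_0^{\perp}$. It is precisely the divergence-free reduction to $\xi^{\perp}$ that keeps $n-1$ such directions sufficient, so that this step can be closed.
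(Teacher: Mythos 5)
Your proposal is correct and follows essentially the same route as the paper: the divergence-free condition gives $\xi\cdot\widehat{\eta_\delta^2 A}(\tau,\xi)=0$, which the paper encodes as the last row of an $n\times n$ linear system whose remaining rows are $(n-1)$ directions $\og_i(\xi)\in\mc{C}_{\og_0}$ perpendicular to $\xi$, with uniform invertibility on a small cone obtained by continuity and zero-homogeneity in $\xi$. Your reformulation via a uniformly non-degenerate basis of $\xi^{\perp}$ and its Gram matrix is equivalent to the paper's nonsingular matrix $M_{\xi}$ and its uniform determinant bound \eqref{bound on determinat}.
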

		
		\begin{proof}
			For a fixed nonzero $\xi\in\mc{H}$ and $k\in\{1,2,...,n\}$, we choose a set of $(n-1)$ linearly independent vectors from $\mc{C}_{\og_0}$ which are perpendicular to $\xi$ and denoted by $\{\og_i(\xi)\}_{1\le i\le n-1}$. Then, consider the following set of $n$ linear equations 
			\begin{align}
			&\ \ \  \sum_{j=1}^{n}{\omega}_i^j(\xi)\widehat{\eta_\delta^2\dd_kA^j}(\tau,\xi)=G_i(\tau,\xi), \qd i\in\{1,2,...,n-1\},\label{system of eqns 1}\\
			\tn{and}&\ \ \ \sum_{j=1}^{n}\xi_j\widehat{\eta_\delta^2\dd_kA^j}(\tau,\xi) =0,\  \left(\tn{since} \ \nabla_x\cdot A=0 \ \tn{gives}\  \nabla_x\cdot(\eta_\delta^2\ \dd_kA)=0\right).\label{system of eqns 2}
			\end{align}
			Here, $\{G_i(\tau, \xi)\}_{1\le i\le n-1}$ are real numbers with a upper bound  given by  \eqref{fourier estimate 1}. Now, consider the  matrix $M_\xi$ related to the system of equations \eqref{system of eqns 1} and \eqref{system of eqns 2} as follow
			\begin{align*}
			M_{\xi}=\begin{pmatrix}
			\og_1^1(\xi) & \og_1^2(\xi) &\cdots & \og_1^n(\xi) \\
			\og_2^1(\xi) & \og_2^2(\xi) &\cdots & \og_2^n(\xi) \\
			\vdots  & \vdots  &\ddots & \vdots  \\
			\og_{n-1}^1(\xi) & \og_{n-1}^2(\xi) &\cdots & \og_{n-1}^n(\xi) \\
			\f{\xi_1}{|\xi|} & \f{\xi_2}{|\xi|} &\cdots & \f{\xi_n}{|\xi|} 
			\end{pmatrix}. 	
			\end{align*}
			From our assumption of $\{\og_i(\xi)\}_{1\le i\le n-1}$, it is clear that $M_\xi$ is a non-singular matrix which is homogeneous of order zero in $\xi$. Hence, we take some $\xi_0\in\mb{S}^{n-1}\cap\mc{H}$ then, $0<|\det M_{\xi_0}|$. Now as we vary $\xi$ in a neighborhood of $\xi_0$ in $\mb{S}^{n-1}$ say $\widetilde{\mc{C}}$ we see the plane $\mc{H}_\xi$ changes continuously giving a linearly independent set of vectors $\{\og_i(\xi)\}_{1\le k\le n-1}$ which depend continuously on $\xi$. Then we have $c>0$ which is independent of $\xi\in\widetilde{\mc{C}}$ such that
			\begin{align}
			0<c\le|\det M_{\xi}|.\label{bound on determinat}
			\end{align}
			For any $r>0$ and $(\tau,\xi)\in\mb R\times\widetilde{\mc{C}}$,  using the uniform positive lower bound for the matrix $M_{\xi}$ in the system of linear equations \eqref{system of eqns 1} and \eqref{system of eqns 2}, we get 
			\begin{align}
			\lvert\widehat{\eta_\delta^2\dd_kA^j}(r\tau,r\xi)\rvert\le C\left(\f{1}{\rt\ld}+e^{\beta\ld}\|\Ld_1-\Ld_2\|\right)\delta^{-6}\langle r\tau,r\xi\rangle^3
			\end{align}
			where $k,j\in\{1,2,\cdots,n\}$.
			Define $\mc{C}\equiv\cup_{r>0}r\widetilde{\mc{C}},$ which is an open cone in $\mb R^{n}$. Thus, for $(\tau,\xi)\in\mb R\times\mc{C}$, we get \eqref{fourier estimate 1}. 
		\end{proof}
		
		\begin{lemma}\label{Conditional Stability} For $R\ge 1$ and $\delta\in(0,T/4)$, there exist  $C>0$ and $\theta\in(0,1)$ such that the following estimate holds
			\begin{align}
			\left\||\xi|\widehat{\eta_\delta^2 A}\right\|_{L^\infty(B(0,R))}\le Ce^{R(1-\theta)}\left(\frac{1}{\rt\lambda}+ e^{\beta\lambda}\|\Lambda_1-\Lambda_2\|\right)^{\theta}\delta^{-6\theta}R^{3\theta}.\label{vassella3}
			\end{align}
		\end{lemma}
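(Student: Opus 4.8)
The plan is to regard $\widehat{\eta_\delta^2 A}$ as the restriction to $\mb{R}^{1+n}$ of an entire function and to upgrade the estimate on the cone $\mc{C}$ furnished by Lemma \ref{Inversion} to an estimate on the whole ball $B(0,R)$ via Vessella's quantitative analytic continuation result \cite{Vessella}. First I would record that, since $\eta_\delta$ is supported in $(0,T)$ and $A=A_1-A_2$ is supported in $\overline{\Omega}$ in the space variable, the function $\eta_\delta^2 A$ is compactly supported in $\mb{R}^{1+n}$ with support radius controlled by $T$ and $\tn{diam}\,\Omega$. Hence, by the Paley--Wiener theorem, each component $\widehat{\eta_\delta^2 A^j}$ extends to an entire function on $\mb{C}^{1+n}$ of exponential type bounded by a constant depending only on $T$ and $\tn{diam}\,\Omega$, which after the usual normalization yields the a priori bound $|\,\widehat{\eta_\delta^2 A^j}(\zeta)\,|\le Ce^{R}$ on the complex ball of radius $R$, with $C$ depending only on $m$ and $Q$. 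This is precisely the \emph{conditional} ingredient in Vessella's scheme.

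Next I would convert the derivative estimate of Lemma \ref{Inversion} into a bound on $|\xi|\widehat{\eta_\delta^2 A}$. Because $\eta_\delta$ is independent of the space variables, one has $\widehat{\eta_\delta^2\dd_k A^j}(\tau,\xi)=i\xi_k\,\widehat{\eta_\delta^2 A^j}(\tau,\xi)$, so that summing the squares over $k$ gives
\[
|\xi|\,\bigl|\widehat{\eta_\delta^2 A^j}(\tau,\xi)\bigr|\le C\Bigl(\tfrac{1}{\rt\ld}+e^{\beta\ld}\|\Ld_1-\Ld_2\|\Bigr)\delta^{-6}\langle\tau,\xi\rangle^3
\]
for $(\tau,\xi)\in\mb{R}\times\mc{C}$. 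Since $\mc{C}$ is an open cone in $\mb{R}^n$, the region $\mb{R}\times\mc{C}$ is an open cone in $\mb{R}^{1+n}$ on which the manifestly entire functions $\xi_k\widehat{\eta_\delta^2 A^j}$ are small, with smallness $\ve_0:=C\bigl(\tfrac{1}{\rt\ld}+e^{\beta\ld}\|\Ld_1-\Ld_2\|\bigr)\delta^{-6}R^3$ after using the trivial bound $\langle\tau,\xi\rangle\le CR$ on $B(0,R)$.

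Then I would invoke Vessella's theorem \cite{Vessella}: for an entire function of fixed exponential type bounded by $M$ on the complex ball of radius $R$ and by $\ve$ on a real open cone, there exist $C>0$ and $\theta\in(0,1)$, depending only on the aperture of the cone (hence on $\ep$ and $\og_0$), on $R$ and on $n$, such that its supremum over $B(0,R)$ is controlled by the interpolated quantity $C\,M^{1-\theta}\ve^{\theta}$. Applying this componentwise to $\xi_k\widehat{\eta_\delta^2 A^j}$ with $M=Ce^{R}$ from the Paley--Wiener bound and $\ve=\ve_0$ from the previous step, and recombining via $|\xi|\,|\widehat{\eta_\delta^2 A^j}|=\bigl(\sum_k|\xi_k\widehat{\eta_\delta^2 A^j}|^2\bigr)^{1/2}$, produces
\[
\left\||\xi|\widehat{\eta_\delta^2 A}\right\|_{L^\infty(B(0,R))}\le Ce^{R(1-\theta)}\Bigl(\tfrac{1}{\rt\ld}+e^{\beta\ld}\|\Ld_1-\Ld_2\|\Bigr)^{\theta}\delta^{-6\theta}R^{3\theta},
\]
which is exactly \eqref{vassella3}.

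The step requiring the most care is the correct application of Vessella's result: one must verify that the functions involved are genuinely of exponential type with the type \emph{independent of all parameters} $\ld,\delta,\tau,\xi$, so that the a priori constant $M$ and the interpolation exponent $\theta$ are uniform, and one must check that the smallness region $\mb{R}\times\mc{C}$ is a cone of fixed aperture (determined by the spherical cap $\mc{C}_{\og_0}$), so that $\theta\in(0,1)$ does not degenerate. The polynomial weight $\langle\tau,\xi\rangle^3$ and the non-analytic factor $|\xi|$ are dealt with by working with the entire functions $\xi_k\widehat{\eta_\delta^2 A^j}$ and by the bound $\langle\tau,\xi\rangle\le CR$ on $B(0,R)$; these are routine once the analytic-continuation machinery is in place.
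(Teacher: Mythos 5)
Your overall strategy coincides with the paper's: both proofs rest on Vessella's conditional stability theorem, use the compact support of $\eta_\delta^2 A$ (in $(0,T)\times\overline\Omega$, with $\tn{diam}\,\Omega<T$) to produce the a priori bound of size $Ce^{R}$, exploit the identity $\widehat{\eta_\delta^2\dd_kA^j}(\tau,\xi)=i\xi_k\widehat{\eta_\delta^2A^j}(\tau,\xi)$ together with Lemma \ref{Inversion} to get smallness on the cone $\mb R\times\mc{C}$, and recombine componentwise using $\langle\tau,\xi\rangle\le CR$ on $B(0,R)$. All of that is fine.

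There is, however, one genuine gap, and it sits exactly at the point you flag as ``requiring the most care.'' You propose to apply Vessella's theorem directly on the ball $B(0,R)$ to a function whose radius of analyticity (equivalently, exponential type of $\eta_\delta^2 A$) is a \emph{fixed} constant determined by $T$ and $\tn{diam}\,\Omega$, and you explicitly allow the resulting exponent $\theta$ to depend on $R$. That is fatal: in Vessella's estimate the exponent degenerates as the diameter of the domain grows relative to the radius of analyticity, so $\theta=\theta(R)\to0$ as $R\to\iy$, whereas the lemma asserts a single $\theta\in(0,1)$ valid for all $R\ge1$, and the subsequent optimization (where $R=\f{1}{\kappa}\log|\log\|\Ld_1-\Ld_2\||$ is sent to infinity) collapses unless $\theta$ is a fixed constant. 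The paper avoids this by the rescaling $f_{R,k}(t,x):=\widehat{\eta_\delta^2\dd_kA}(Rt,Rx)$: the derivative bounds become
\begin{align*}
\left|\partial_{(t,x)}^{\gamma}f_{R,k}(t,x)\right|\le C_*(\rt{2}T)^{|\gamma|}|\gamma|!\,\f{R^{|\gamma|}}{|\gamma|!}\le C_*e^{R}\f{|\gamma|!}{(T^{-1})^{|\gamma|}},
\end{align*}
so Vessella is applied on the \emph{fixed} unit ball, with fixed radius of analyticity $\sim T^{-1}$ and fixed smallness set $(\mb R\times\mc{C})\cap B(0,1)$ (whose aperture is determined by $\ep$ and $\og_0$); the price of the rescaling is that the a priori constant inflates from $C_*$ to $C_*e^{R}$, which is precisely the origin of the factor $e^{R(1-\theta)}$ in \eqref{vassella3}, while $\theta$ remains independent of $R$, $\delta$ and $\ld$. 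Your argument becomes correct once you insert this normalization (or an equivalent device guaranteeing the uniformity of $\theta$ in $R$); as written, the claimed conclusion does not follow from the version of Vessella's theorem you invoke.
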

		\begin{proof}
			Fix $k\in\{1,2,\cdots,n\}$ and consider the analytic function $f_{R,k}$ given by
			\[f_{R,k}(t,x)=\widehat{\eta_\delta^2\dd_{k}A}(Rt,Rx),\qd\m{ for }R>0\m{ and }(t,x)\in\mb R^{n+1}.\]
			For any multi-index $\gamma$ in $\mb{R}^{1+n}$ we observe 
			\begin{align*}      
			\qd\left|\partial_{(t,x)}^{\gamma}f_{R,k}(t,x)\right|&=\ \left|\partial_{(t,x)}^{\gamma}\widehat{\eta_\delta^2\dd_{k}A}(Rt,Rx)\right|=\left|\int_{\mathbb{R}^{1+n}}e^{-iR(s,y)\cdot (t,x)}(-i)^{|\gamma|}R^{|\gamma|}(s,y)^{\gamma}(\eta_\delta^2\dd_{k}A)(s,y)\ \d s\d y\right| \nn \\
			&\le \int_{\mathbb{R}^{1+n}}R^{|\gamma|}\left(s^2+|y|^2\right)^{\f{|\gamma|}{2}}(\eta_\delta^2\dd_{k}A)(s,y)\ \d s\d y,\nn\\
			&\le(2T^2)^{\f{|\gamma|}{2}}R^{|\gamma|}\int_{\mb R^{1+n}}|(\eta_\delta^2\dd_{k}A)(s,y)|\ \d s\d y. 
			\end{align*}
			Now using diam$(\Omega)<T$ and apriori bound of $A$, we have 
			\begin{align*}
			\qd\left|\partial_{(t,x)}^{\gamma}f_{R,k}(t,x)\right|\le C_*\ (2T^2)^{\f{|\gamma|}{2}}\ R^{|\gamma|}=C_*(\rt{2}T)^{|\gamma|}|\gamma|!\ \f{R^{|\gamma|}}{|\gamma|!},
			\end{align*} 
			which immediately gives
			\begin{align}
			\left|\partial_{(t,x)}^{\gamma}f_{R,k}(t,x)\right|\le C_*e^{R}\f{|\gamma|!}{(T^{-1})^{|\gamma|}},\ \m{ for }(t,x)\in\mb R^{n+1}\m{ and multi-index }\gamma. \label{prevessellla1}  
			\end{align}
			Since	$f_{R,k}$  satisfies \eqref{prevessellla1}, therefore using the  Vessella's conditional stability result \cite{Vessella} to $f_{R,k}$, we obtain  
			\begin{align}       
			\|f_{R,k}\|_{L^\infty(B(0,1))}\le Ce^{R(1-\theta)}\ \|f_{R,k}\|_{L^\infty((\mb{R}\times\mc{C})\cap B(0,1))}^\theta,\qd \m{for some}\ \theta\in(0,1). \label{vassella}
			\end{align}
			Since $\|f_{R,k}\|_{L^\infty(B(0,1))}=\left\|\widehat{\eta_\delta^2\dd_{k}A}\right\|_{L^\infty(B(0,R))}$, therefore using lemma \eqref{Inversion} and equation \eqref{vassella}, we get 
			\begin{align*}  
			\left\|\widehat{\eta_\delta^2\dd_{k}A}\right\|_{L^\infty(B(0,R))}\le Ce^{R(1-\theta)}\left(\frac{1}{\rt\lambda}+ e^{\beta\lambda}\|\Lambda_1-\Lambda_2\|\right)^{\theta}\delta^{-6\theta}(1+R^2)^{\f{3\theta}{2}}.
			\end{align*}  
			which can be expressed for $R\ge1$ in the following form
			\begin{align*}
			\left\||\xi|\widehat{\eta_\delta^2 A}\right\|_{L^\infty(B(0,R))}\le Ce^{R(1-\theta)}\left(\frac{1}{\rt\lambda}+ e^{\beta\lambda}\|\Lambda_1-\Lambda_2\|\right)^{\theta}\delta^{-6\theta}R^{3\theta}.
			\end{align*}        	
		\end{proof}
		
		\vspace*{-0.2cm}        
		Now combining \eqref{vassella3} and the apriori assumption on the potentials, we establish a Sobolev bound of $A$ in terms of the partial DN map. The main argument here is to set a comparison between the large parameters $\ld$ and $R$. Also, we have to choose the small parameter $\delta$ accordingly.
		But first we observe the following estimate
		\begin{align}    
		\left\|\eta_\delta^2 A\right\|_{L^2(Q)}^{\f{2}{\theta}}&=\left(\int_{\mb R^{1+n}}|\widehat{\eta_\delta^2 A}|^2(s,y)\ \d s\d y\right)^{\f{1}{\theta}}=\left(\int_{B(0,R)}|\widehat{\eta_\delta^2 A}|^2(s,y)\ \d s\d y+\int_{B(0,R)^c}|\widehat{\eta_\delta^2 A}|^2(s,y)\ \d s\d y\right)^{\f{1}{\theta}}\nn\\
		& \ \ \ \ \ \ \ \le \left(\underset{T_1}{\underbrace{\int_{B(0,R)}|\widehat{\eta_\delta^2 A}|^2(s,y)\ \d s\d y}}\right)^{\f{1}{\theta}}+\left(\underset{T_2}{\underbrace{\int_{B(0,R)^c}|\widehat{\eta_\delta^2 A}|^2(s,y)\ \d s\d y}}\right)^{\f{1}{\theta}}. \label{Sobolev norm 1}
		\end{align} 
		$T_2$ can be easily estimated after using the apriori assumptions for the potentials. We see
		\begin{align}\label{part 2}
		\begin{aligned}
		T_2&=\int_{B(0,R)^c}|\widehat{\eta_\delta^2 A}|^2(s,y)\ \d s\d y \\
		&\ \ \ \ \ \le \f{1}{R^2}\int_{\mb R^{1+n}}\langle\tau,\xi\rangle^2|\widehat{\eta_\delta^2 A}|^2(s,y)\ \d s\d y
		\le \ \f{1}{R^2}\|\eta_\delta^2 A\|_{H^{1}(Q)}^2 \le \ \f{C}{\delta^2R^2}.    
		\end{aligned}
		\end{align}
		To estimate $T_1$, we use lemma \ref{Conditional Stability} . We break $T_1$ into two parts. We consider $T_1=T_{11}+T_{12}$, where
		\begin{align}\label{part 1.a}
		\begin{aligned}
		T_{11}&:= 
		\int_{B(0,R)\cap\{(s,y);\ |y|\le R^{-\f{3}{n}}\}}|\widehat{\eta_\delta^2 A}|^2(s,y)\ \d s\d y\\
		& \ \ \ \ \ \ \ \ 	\le\  \|\widehat{\eta_\delta^2 A}\|_{L^\iy(\mb R^{n+1})}\int_{-R}^{R}\int_{|y|\le R^{-\f{3}{n}}}\ \d s\d y 
		\le \ CR^{-2}
		\end{aligned}
		\end{align}
		and
		\begin{align}\label{part 1.b}
		\begin{aligned}
		T_{12}&:=\int_{B(0,R)\cap\left\{(s,y);\ |y|> R^{-\f{3}{n}}\right\}}|\widehat{\eta_\delta^2 A}|^2(s,y)\ \d s\d y\\
		& \ \ \ \ \ \ \ \ \le e^{2R(1-\theta)}\left(\frac{1}{\rt\lambda}+ e^{\beta\lambda}\|\Lambda_1-\Lambda_2\|\right)^{2\theta}\delta^{-12\theta}R^{6\theta+n+1+\f{6}{n}}. 
		\end{aligned}
		\end{align}	
		Because of the support condition of $\{\eta_\delta\}_{\delta>0}$, we have
		\begin{align}
		\|A\|_{L^2(Q)}^2\le \|\eta_\delta^2A\|_{L^2(Q)}^2+\ C\delta. \label{relation}
		\end{align}
		Taking $\alpha=6+\f{n^2+n+6}{n\theta}$ and combining \eqref{Sobolev norm 1}-\eqref{part 1.b}, we obtain 
		\begin{align}
		\|A\|_{L^2(Q)}^{\f{2}{\theta}}\le& C\left(\f{R^{\alpha}}{\delta^{12}}e^{\f{2R(1-\theta)}{\theta}}\left(\frac{1}{\lambda}+e^{2\beta\lambda}\|\Lambda_1-\Lambda_2\|^2\right)+\f{1}{\delta^{\f{2}{\theta}}R^{\f{2}{\theta}}}+\delta^{\f{1}{\theta}}\right) \nn\\
		\le& C\left(\underset{I}{\underbrace{\f{R^{\alpha}e^{\f{2R(1-\theta)}{\theta}}}{\ld\delta^{12}}}}+\underset{II}{\underbrace{\f{R^{\alpha\theta}e^{\f{2R(1-\theta)}{\theta}+2\beta\ld}}{\delta^{12}}\|\Lambda_1-\Lambda_2\|^2}}+\underset{III}{\underbrace{\f{1}{\delta^{\f{2}{\theta}}R^{\f{2}{\theta}}}}}+\underset{IV}{\underbrace{\delta^{\f{1}{\theta}}}}\right).\label{main estimate 4}
		\end{align}
		We now choose $\ld,\delta$ and $R$ in a way so that the terms (I), (III) and (IV) in \eqref{main estimate 4} are comparable. That is when 
		\begin{align}
		\delta=\f{1}{R^{\f{2}{3}}}\ \ \tn{and}\ \  \ld=R^{\alpha+8+\f{2}{3\theta}}e^{\f{2R(1-\theta)}{\theta}}\label{delta,R and lambda 1}
		\end{align}
		and hence there exists $\kappa>0$ (independent of $R$) such that II of \eqref{main estimate 4} can be bounded by
		\begin{align}
		e^{e^{\kappa R}}\|\Lambda_1-\Lambda_2\|^2.\label{delta,R and lambda 2}
		\end{align}
		Combining \eqref{main estimate 4}-\eqref{delta,R and lambda 2}, it is clear that 
		\begin{align}
		\|A\|_{L^2(Q)}^{\f{2}{\theta}}\le C\left(\f{1}{R^{\f{2}{3\theta}}}+e^{e^{\kappa R}}\|\Lambda_1-\Lambda_2\|^2\right).\label{main estimate 5}
		\end{align}
		We now choose $R>0$ large enough (which in turn depends on the smallness of partial DN map) which is $R=\f{1}{\kappa}\log\big|\log\|\Lambda_1-\Lambda_2\|\big|$, so that \eqref{main estimate 5} becomes 
		\begin{align}\label{L^2 estimate for the vector potential}
		\|A\|_{L^2(Q)}^{\f{2}{\theta}}\le C\left(\|\Lambda_1-\Lambda_2\|+\left(\log|\log\|\Lambda_1-\Lambda_2\||\right)^{-\f{2}{3\theta}}\right).
		\end{align}
		We note that, \eqref{L^2 estimate for the vector potential} can be easily derived for the case when $\|\Ld_1-\Ld_2\|$ is not so small. This concludes the proof for stability of first order coefficients from the partial DN map.\\
		
		Now we establish the stability result for the zeroth order term. There will be no zeroth order term left in \eqref{integral identity} once we divide  it by large $\ld$. So we have to make necessary changes for deriving Fourier estimates. We explicitly use here the stability result for the first order terms \eqref{L^2 estimate for the vector potential}. We consider a different exponentially growing solutions for $\mc{L}_{A_2,q_2}$, whereas the geometric optics for $\mc{L}_{-A_1,\overline q_1}$ is same as before which is \eqref{GO 2}. We have
		\begin{align*}
		&u_{2}(t,x)=e^{\phi(t,x)}\left(B_{2}+R_{2}\right)(t,x)
		\end{align*}
		where  \begin{align*} B_{2}(t,x)=e^{-i\lb t\tau+x\cdot\xi\rb}\eta_\delta (t)e^{\left(\int_{0}^{\iy}\omega\cdot A_2(t,x+s\omega)\ \d s\right)}, 
		\end{align*}
		and $R_2\in L^2\left(0,T;H^2(\Og)\right)$ satisfying for $k\in\{0,1,2\}$
		\begin{align} \label{estimates of remainder 2}
		\|R_2\|_{L^2(0,T;H^k_\ld(\Omega))}\le C\ld^{-1+k}\delta^{-3}\langle\tau,\xi\rangle^3. 
		\end{align}
		For convenience, we rewrite the integral inequality 
		\begin{align}
		2\int_{Q}(A\cdot\nabla_xu_2)(t,x)\overline{v}(t,x)\ \d x\d t+\int_{Q}\widetilde{q}(t,x)u_2(t,x)\overline{v}(t,x)\ \d x\d t=\int_{\Sigma}\overline v(t,x)\dd_{\nu}u(t,x) \ \d S_{x}\d t.\qd\label{integral identity 2}
		\end{align}
		First, we simplify all the terms present in left hand side of \eqref{integral identity 2}. We observe 
		\begin{align}\label{estimte of q 1}
		\begin{aligned}
		\widetilde{q}(t,x)u_2(t,x)\overline{v}(t,x)=\widetilde{q}e^{-it\tau-ix\cdot \xi}\eta_\delta^2(t)e^{-\int_{0}^{\iy}\og\cdot A(t,x+s\og)\d s}
		+B_2(t,x)R(t,x)+B(t,x)R_2(t,x). 
		\end{aligned}
		\end{align}	
		We use Cauchy-Schwarz inequality alongwith \eqref{remainder terms} and \eqref{estimates of remainder 2} to obtain 
		\begin{align}
		    \|B_2R\|_{L^1(Q)}+\|BR_2\|_{L^1(Q)}\le \f{C}{\ld}\delta^{-3}\langle\tau,\xi\rangle^3
		\end{align}
		The other term present in the L.H.S of \eqref{integral identity 2} is
		\begin{align}
		    2(A\cdot\nabla_x u_2)(t,x)\overline v(t,x)=2\left(\ld\omega\cdot AB_2+\ld\omega\cdot AR_2+A\cdot\nabla_x B_2+A\cdot\nabla_x R_2\right)(t,x)\left(\overline{B}+\overline R\right)(t,x)
		\end{align}
		which is estimated by using the remainder term estimates given in \eqref{remainder terms} and \eqref{estimates of remainder 2}
		\begin{align}
		    \left|\int_{Q}(A\cdot\nabla_x u_2)(t,x)\overline v(t,x)\ \d x\d t\right|\le C\ld\|A\|_{L^2(Q)}\delta^{-3}\langle\tau,\xi\rangle^3.
		\end{align}
		To estimate the boundary term in the R.H.S of \eqref{integral identity 2}, we proceed as before and obtain
		\begin{align}
		    \left|\int_{\Sigma}\overline v(t,x)\dd_{\nu}u(t,x)\ \d S_x\d t\right|\le C\left(\rt{\f{\mc{K}}{\ld}}+\delta^{-6}\langle\tau,\xi\rangle^3e^{\beta\ld}\|\Lambda_1-\Lambda_2\|\right)
		\end{align}
		Here $\mc{K}$ is the R.H.S of the boundary Carleman estimate \eqref{Boundary Carleman estimate Theorem} applied to $\mc{L}_{A_1,q_1}$ for $u$,
		\begin{align}
		    \mc{K}=\int_{Q}e^{-2\phi}|\mc{L}_{A_1,q_1}u|^2\ \d x\d t+\ld\int_{\Sigma_{-}(\og)}e^{-2\phi}|\og\cdot\nu(x)||\dd_{\nu}u|^2\ \d S_x\d t.
		\end{align}
		From \eqref{PDE for the diff of soln 1} we have, $\mc{L}_{A_1,q_1}u=2A\cdot\nabla_x u_2+\tilde{q}u_2$. Hence we see
		\begin{align}
		    e^{-\phi}\mc{L}_{A_1,q_1}u=2\left(A\cdot\nabla_x B_2+\ld\og\cdot AR_2+A\cdot\nabla_x B_2+A\cdot\nabla_x R_2\right)+\tilde{q}(B_2+R_2).
		\end{align}
		Thus, we use \eqref{estimates of remainder 2} to obtain 
		\begin{align}\label{estimte of q 5}
		\mc{K}\le C\left(\ld^2\|A\|_{L^2(Q)}^2+1+e^{\beta\ld}\|\Ld_1-\Ld_2\|^2\right)\delta^{-12}\langle \tau,\xi\rangle^6. 
		\end{align}	
			
		Combining \eqref{integral identity 2}-\eqref{estimte of q 5}, we conclude for $(\tau,\xi)\in\mb R\times\mc{H}$
		\begin{align}\label{q estimate over cone}
		\begin{aligned}
		\left|\widehat{\eta_\delta^2 \widetilde{q}}(\tau,\xi)\right|&=\left\vert\int_{Q}e^{-i\lb t\tau+x\cdot\xi\rb}\eta_\delta^2 (t)\widetilde{q}(t,x)\ \d x\d t \right\vert \\
		&\ \ \ \ \  \le C\left(\ld\|A\|_{L^2(Q)}+\f{1}{\rt\ld}+e^{\beta\ld}\|\Ld_1-\Ld_2\|\right)\delta^{-6}\langle \tau,\xi\rangle^3.
		\end{aligned}
		\end{align}
		Basically \eqref{q estimate over cone} gives estimate for the Fourier transform of $\eta_\delta^2\widetilde{q}$ over the cone $\mb R\times\mc{H}$ in $\mb R^{1+n}$. So we apply Vessella's conditional stability result \cite{Vessella} as done before to obtain this estimate over arbitrary large balls. Mimicing the arguments presented before, we  get the following estimate similar to \eqref{main estimate 4}
		\begin{align}
		&\|\widetilde{q}\|_{L^2(Q)}^{\f{2}{\theta}}\le C\left(\f{R^{\alpha'}}{\delta^{12}}e^{\f{2R(1-\theta)}{\theta}}\left(\ld^2\|A\|_{L^2(Q)}^2+1+e^{\beta\ld}\|\Ld_1-\Ld_2\|^2\right)+\f{1}{\delta^{\f{2}{\theta}}R^{\f{2}{\theta}}}+\delta^{\f{1}{\theta}}\right) \nn\\
		&\  \le C\left(\underset{I}{\underbrace{\f{R^{\alpha'}e^{\f{2R(1-\theta)}{\theta}}}{\delta^{12}}\ld^2\|A\|_{L^2(Q)}^2}}+\underset{II}{\underbrace{\f{R^{\alpha'}e^{\f{2R(1-\theta)}{\theta}}}{\delta^{12}}}}+\underset{III}{\underbrace{\f{R^{\alpha\theta}e^{\f{2R(1-\theta)}{\theta}+\beta\ld}}{\delta^{12}}\|\Lambda_1-\Lambda_2\|^2}}+\underset{IV}{\underbrace{\f{1}{\delta^{\f{2}{\theta}}R^{\f{2}{\theta}}}}}+\underset{V}{\underbrace{\delta^{\f{1}{\theta}}}}\right)\label{pre estimate}
		\end{align}
		We choose $\delta$ and $R$ such that (II),(IV) and (V) of \eqref{pre estimate} are comparable. That is when 
		\begin{align*}
		\delta=\f{1}{R^{\f{2}{3}}}\ \tn{and}\  \ld=R^{\alpha'+8+\f{2}{3\theta}}e^{\f{2R(1-\theta)}{\theta}}
		\end{align*}
		here $\alpha'=6+\f{n+1}{\theta}$. 
		Now using the stability result \eqref{L^2 estimate for the vector potential}, we  obtain  
		\begin{align}\label{final estimate of q 1}
		\|\widetilde{q}\|_{L^2(Q)}^{\f{2}{\theta}}\le C\left({e^{\kappa R} }\|\Lambda_1-\Lambda_2\|^{2\mu_1}+{e^{\kappa R} }\left\vert\log|\log\|\Ld_1-\Ld_2\||\right\vert^{-2\mu_2}+e^{e^{\kappa R} }\|\Lambda_1-\Lambda_2\|^2+\f{1}{R^{\f{2}{3\theta}}}\right)
		\end{align}
		for some constants $\kappa>0$ and $\mu_1,\mu_2>0$.
		Taking $R=\f{\mu_1}{\kappa}\log\log|\log\|\Ld_1-\ld_2\||$, we get from \eqref{final estimate of q 1} that $\|\widetilde{q}\|_{L^2(Q)}^{\f{2}{\theta}}$ has the upper bound
		\begin{align}\label{final estimate of q 2}
		\begin{aligned}
		&\|\Lambda_1-\Lambda_2\|^{2\mu_1}\left(\log|\log\|\Ld_1-\Ld_2\||\right)^{-2\mu_2}+\left(\log|\log\|\Ld_1-\Ld_2\||\right)^{-\mu_2}\\
		&\ \ \ \ \ \ \ \ \ \ \ +\|\Lambda_1-\Lambda_2\|^{2}|\log\|\Ld_1-\Ld_2\|| 
		+\left(\log\log|\log\|\Ld_1-\Ld_2\||\right)^{-\f{2}{3\theta}}.
		\end{aligned}
		\end{align}
		We note that our choice of $R$ related to the smallness of $\delta$ and largeness of $\ld$. Hence, the estimate \eqref{final estimate of q 2} is valid only when $\|\Ld_1-\Ld_2\|$ is small enough. The other case follows easily. Also, we need smallness of $\|\Ld_1-\Ld_2\|$ such that the following hold 
		\begin{align*}
		\|\Lambda_1-\Lambda_2\|^{\mu_1}\left(\log|\log\|\Ld_1-\Ld_2\||\right)^{-2\mu_2}+\|\Lambda_1-\Lambda_2\||\log\|\Ld_1-\Ld_2\||\le C.
		\end{align*}
		Thus for both the cases, we arrive at the following estimate where $C,\alpha_1$ and $\alpha_2>0$
		\begin{align}\label{L^2 estimate for the scalar potential}
		\|\widetilde{q}\|_{L^2(Q)}\le C\left(\|\Ld_1-\Ld_2\|_{L^2(Q)}^{\alpha_1}+\left\vert\log\left|\log|\log\|\Ld_1-\ld_2\|\right|\right\vert^{-\alpha_2}\right).
		\end{align}
		Now we want to prove the stability result for $q:=q_1-q_2$. We recall 
		\begin{align*}
		q(t,x)=\widetilde{q}(t,x)+\nabla_x \cdot A(t,x)+\left(|A_1|^2-|A_2|^2\right)(t,x).
		\end{align*}
		Hence, we obtain the following 
		\begin{align}
		\|q\|_{L^2(Q)}\le \|\widetilde q\|_{L^2(Q)}+(2m+1)\|A\|_{L^2(0,T;H^1(\Og))}. \label{relation for q}
		\end{align}
		Since our assumptions on the first order perturbations are more than $H^1$, we can translate the $L^2$ norm estimates to that of $H^1$ using logarithmic convexity for Sobolev norms. Thus there exist $C>0$  and $\theta\in(0,1)$ depending only on $m$ and $Q$ so that we have 
		\begin{align}
		\|A\|_{H^1(Q)}\le C\|A\|_{L^2(Q)}^\theta \label{convexity argument}
		\end{align}
		Using  \eqref{convexity argument}, the $L^2$ stability results in \eqref{L^2 estimate for the vector potential} and  \eqref{L^2 estimate for the scalar potential} in \eqref{relation for q}, we  obtain 
		\begin{align*}
		\|q\|_{L^2(Q)}\le C\left(\|\Ld_1-\Ld_2\|^{\beta_1}+\left\vert\log|\log|\log\|\Ld_1-\Ld_2\||\right\vert^{-\beta_2}\right).
		\end{align*}
		for some $C,\beta_1$ and $\beta_2>0$. This completes the proof of Theorem \ref{Main Theorem}.

		\section*{Acknowledgments}	
		The authors would like to express their sincere gratitude and thanks to Venky Krishnan for many enlightening discussions and useful comments on this project.

	\end{document}